\documentclass[11pt]{article}

\evensidemargin0cm \oddsidemargin0cm \textwidth16cm
\textheight23cm \topmargin-2cm

\usepackage{amsmath}
\usepackage{amsthm}
\usepackage{amsfonts}
\usepackage{bbm}
\usepackage{amssymb}
\usepackage{graphicx}
\usepackage{color}

\usepackage{fancybox}
\setlength{\fboxsep}{1.5ex}
\newlength{\querylen}
\setlength{\querylen}{\textwidth}
\addtolength{\querylen}{-2\fboxsep}

\newcommand{\mmp}{\mathbb{P}}

\newcommand{\od}{\overset{{\rm d}}{=}}
\newcommand{\dod}{\overset{{\rm d}}{\to}}

\newcommand{\me}{\mathbb{E}}

\newcommand{\mr}{\mathbb{R}}
\newcommand{\mn}{\mathbb{N}}

\DeclareMathOperator{\1}{\mathbbm{1}}

\newtheorem{thm}{Theorem}[section]
\newtheorem{lemma}[thm]{Lemma}

\newtheorem{cor}[thm]{Corollary}

\theoremstyle{definition}

\theoremstyle{remark}

\begin{document}
\title{The collision spectrum of $\Lambda$-coalescents}

\author{Alexander Gnedin\footnote{School of Mathematical Sciences, Queen Mary, University of London, Mile End Road, London E1 4NS, UK;\ e-mail: a.gnedin@qmul.ac.uk},\ \ Alexander Iksanov\footnote{Faculty of Computer Science and Cybernetics, Taras Shevchenko National University of Kyiv, 01601 Kyiv, Ukraine; \ e-mail: iksan@univ.kiev.ua},\ \ Alexander Marynych\footnote{Faculty of Computer Science and Cybernetics, Taras Shevchenko National University of Kyiv, 01601 Kyiv, Ukraine; \ e-mail: marynych@unicyb.kiev.ua} \ \ \text{and}\ \ Martin M\"{o}hle\footnote{Mathematical Institute, Eberhard Karls University of T\"{u}bingen, 72076 T\"{u}bingen, Germany; \ e-mail: martin.moehle@uni-tuebingen.de}}

\maketitle
\begin{abstract}
\noindent
$\Lambda$-coalescents model the evolution of a coalescing system in which any number of blocks randomly sampled from the whole may merge into a larger block. For the coalescent restricted to initially $n$ singletons we study the collision spectrum $(X_{n,k}:2\le k\le n)$, where
$X_{n,k}$ counts, throughout the history of the process, the number of collisions involving exactly $k$ blocks.
Our focus is on the large $n$ asymptotics of the joint distribution of the $X_{n,k}$'s, as well as on functional limits for the bulk of the spectrum for simple coalescents. Similarly to the previous studies of the total number of collisions, the asymptotics of the collision spectrum largely depends on the
behaviour of the measure $\Lambda$ in the vicinity of $0$. In particular, for beta$(a,b)$-coalescents different types of limit distributions occur depending on whether $0<a\leq 1$, $1<a<2$, $a=2$ or $a>2$.
\end{abstract}

\noindent Key words: collision spectrum; coupling; exchangeable coalescent; functional approximation

\noindent 2000 Mathematics Subject Classification: Primary: 60J25, 60F17 \\
\hphantom{2000 Mathematics Subject Classification: }Secondary:
60C05, 60G09

\section{Introduction}\label{Sect1}
The $\Lambda$-coalescents introduced by Pitman \cite{Pitman:1999} and Sagitov \cite{Sagitov:1999}
are partition-valued Markov processes  which evolve according to the rule: if at some time the process is restricted to a partition
with $m$ separate blocks, then  any $k$ particular blocks collide and merge in one block at rate
\begin{equation}\label{lrates}
\lambda_{m,\,k}=\int_{[0,\,1]}x^k(1-x)^{m-k}x^{-2}\Lambda({\rm d}x),
\qquad 2\le k\leq m,
\end{equation}
where $\Lambda$ is a given finite measure on $[0,1]$. For a $\Lambda$-coalescent $\Pi_n=(\Pi_n(t))_{t\ge 0}$ which starts with $n$ singleton blocks
and terminates with a single block the possible states are partitions of the set $[n]:=\{1,\dots,n\}$.
Due to consistency for various $n$, one can also view $\Pi_n$ as the restriction
to $[n]$ of an infinite $\Lambda$-coalescent $\Pi_\infty=(\Pi_\infty(t))_{t\ge 0}$
with values in the set of partitions of the infinite set $\mathbb N$.

A parametric family of $\Lambda$-coalescents are the beta-coalescents with characteristic measure
\begin{equation}\label{beta(a,b)}
\Lambda({\rm d}x)= \frac{1}{{\rm B}(a,b)}  \,x^{a-1}(1-x)^{b-1}\1_{(0,1)}(x) {\rm d}x,~~a,b>0
\end{equation}
which have collision rates expressible in terms of the beta function ${\rm B}(\cdot,\cdot)$. The instance $a=b=1$, where
$\Lambda$ is the uniform distribution on $[0,1]$, is known as the Bolthausen-Sznitman coalescent.

Among the functionals characterising the speed of coalescence, the total number of collisions $X_n$
(transitions of $\Pi_n$  until absorption)  has attracted the most attention, see
\cite{Gnedin+Iksanov+Marynych:2011, Gnedin+Iksanov+Moehle:2008,Gnedin+Yakubovich:2007,Haas+Miermont:2011,Iksanov+Marynych+Moehle:2009}
and a survey paper \cite{Gnedin+Iksanov+Marynych:2014}. Here, we are interested in more delicate  properties of the coalescent process by distinguishing mergers of various sizes, that is decomposing the total number of collisions as
$$
X_n\ =\ \sum_{k=2}^n X_{n,k},\quad 2\leq k \leq n,
$$
where $X_{n,k}$ is the number of collisions resulting in a merger of $k$ blocks.
In the genealogical representation of the path of $\Pi_n$  by a rooted tree,
$X_{n,k}$ corresponds to the number of nodes of degree $k+1$. We call the collection of counts $(X_{n,k}:2\le k\le n)$
the {\it collision spectrum} of $\Pi_n$.

In many respects the collision spectrum is similar to the collection of counts of component sizes in
decomposable combinatorial structures \cite{ABT, CSP}. By this analogy, it is of  interest to look at the large-$n$ behaviour of the  first few components of the collision spectrum, as well as to identify the counts that make a major contribution to the total number of collisions.

Other types of spectra were studied for $\Lambda$-coalescents with mutations (or freeze, see e.g. \cite{DGP}),
where the terminal state of the process is the so-called allelic partition. These are the site frequency spectrum
\cite{Birkner+Blath+Eldon:2013,Blath+Cronjaeger+Christensen+Eldon+Hammer:2016,Eldon+Birkner+Blath+Freund:2015,
Spence+Kamm+Song:2016} (for the general $\Lambda$-coalescents) and the allelic partition frequency spectrum \cite{Basdevant+Goldschmidt:2008} (for the Bolthausen-Sznitman coalescent). Compared to these, the collision spectrum is simpler and better amenable to analysis for a large class of measures $\Lambda$ by the methods developed for $X_n$.

In two cases the collision spectrum is trivial. For $\Lambda$ the unit mass at $0$,
$\Pi_\infty$ is Kingman's coalescent with only binary collisions, hence $X_n=X_{n,2}=n-1$.
For $\Lambda$ the unit mass at 1, $\Pi_n$ has a sole transition, and  $X_n=X_{n,n}=1$.
In the sequel we shall assume that $\Lambda$ has no atoms at $0$ and $1$. Still, the two extremes offer the intuition that concentration of the measure $\Lambda$ near $0$ affects the abundance of collisions involving small number of blocks.

A rough quantification of the concentration of the measure $\Lambda$ near $0$ involves the first two  moments of negative order
\begin{equation}\label{eq:m_minus}
{\tt m}_{-r} \ := \ \int_{[0,\,1]}x^{-r}\Lambda({\rm d}x), ~~r=1,2
\end{equation}
which may be finite or infinite. If ${\tt m}_{-1}<\infty$ then the infinite coalescent has a nontrivial {\it dust} component, which is the
collection of singleton blocks of $\Pi_\infty(t)$, for every $t\geq 0$. Under the stronger condition ${\tt m}_{-2}<\infty$ the coalescent is {\it simple}, in the sense that transitions of $\Pi_\infty$ occur at isolated times of a Poisson process. If ${\tt m}_{-1}=\infty$ there is a further division in coalescents $\Pi_\infty$ that terminate in finite time, and coalescents that have infinitely many blocks for every $t\geq 0$.

The rest of the paper is organised as follows.
In Section \ref{Sect2} we consider the case ${\tt m}_{-2}<\infty$ of simple coalescents,
showing  that the $X_{n,k}$'s  jointly converge without scaling as $n$ grows.
Thus for every fixed $k$ the contribution of $X_{n,k}$ to $X_n$ is asymptotically negligible,
and so is for every finite collection of the values of $k$. In Section \ref{Sect3} for simple coalescents we assess
the joint contribution to $X_n$ in the form of a functional limit theorem for the process of cumulative counts
\begin{equation}\label{cumul-count}
X_n(s):=\sum_{k=2}^{\lfloor n^s\rfloor }X_{n,k},\quad s\in[0,1],
\end{equation}
where $\lfloor \cdot \rfloor$ is the floor function. In Section \ref{Sect4} we consider the case where ${\tt m}_{-2}=\infty$ but ${\tt m}_{-1}<\infty$, showing that under an assumption of regular variation each $X_{n,k}$ is asymptotic to a constant multiple of $X_n$, while for coalescents related to the gamma-type subordinators the spectrum has a nondegenerate multivariate normal distribution. In Section \ref{Sect5} we consider beta$(a,b)$-coalescents with $a\in(0,1]$ (for which ${\tt m}_{-1}=\infty$), and show that the $X_{n,k}$'s, properly centered and normalised, jointly converge to multiples of the same stable random variable.

In application to beta$(a,b)$-coalescents, our results on limit distributions of $(X_{n,k}-a_{n,k})/b_{n,k}$
are summarised in the following table:
\begin{center}
{\sc Table 1}: Limits of $(X_{n,k}-a_{n,k})/b_{n,k}$ for  ${\rm beta}(a,b)$-coalescents\\
\begin{tabular}{cccccc}
   \hline
   $a$     & $b$   & $a_{n,k}$                                        & $b_{n,k}$                                      & Limit                     & Source \\
   \hline
   $0<a<1$ & $b>0$ & $p_{k-1}^{(a)}(1-a)n$                            & $p_{k-1}^{(a)}(1-a)n^{1/(2-a)}$                & $\mathcal{S}_{2-a}$       & Thm.~\ref{main4}(i)\\
   $a=1$   & $b>0$ & $p_{k-1}^{(1)}\frac{n\log(n\log n)}{(\log n)^2}$ & $p_{k-1}^{(1)}\frac{n}{(\log n)^2}$            & $\mathcal{S}_1$           & Thm.~\ref{main4}(ii)\\
   $1<a<2$ & $b>0$ & $0$                                              & $\frac{\Gamma(k+a-2)}{k!}n^{2-a}$              & $\mathcal{E}_{2-a}$       & Thm.~\ref{main3}\\
   $a=2$   & $b>0$ & $(k\nu_1)^{-1}\log n$ & $\sqrt{\left(\frac{\nu_2}{\nu_1^3}\frac{1}{k^2}+\frac{1}{k\nu_1}\right)\log n}$   & $\mathcal{N}$             & Thm.~\ref{gammacase}\\
   $a>2$   & $b>0$ & 0 & 1 & exists & Thm.~\ref{thm:main_thm_stat}\\
   $a=3$   & $b>0$ & 0 & 1 & Poisson$(\frac{b}{k-1})$ & \cite[Thm.~3.5]{Moehle:2017+}\\
\hline
\end{tabular}
\end{center}

\vskip0.3cm
\noindent
Here, ${\cal S}_{2-a}$ denotes a random variable with a $(2-a)$-stable distribution, ${\cal E}_{2-a}$
a random variable representable as the exponential functional of a subordinator (nondecreasing
L{\'e}vy process), and ${\cal N}$ a standard normal random variable. Explanation for the scaling/centering constants will appear in a due course.

\section{Simple coalescents: convergence of the spectrum}\label{Sect2}

Let $N_n(t)$ be the number of blocks of the partition $\Pi_n(t)$.
The counting process $N_n:=(N_n(t))_{t\geq 0}$ is Markovian, starting at
$N_n(0)=n$ and decrementing from $m$ to $m-k$ at rate $\lambda_{m,k+1}$,
$1\leq k<m$. The total collision rate on $n$ blocks is
$$
\lambda_n:=\sum_{k=2}^{n}{n\choose k} \lambda_{n,k}=\int_{[0,\,1]} x^{-2}(1-nx(1-x)^{n-1}-(1-x)^n)\Lambda({\rm d}x),\quad n\geq 2,
$$
and the first transition of $\Pi_n$ is a collision of $k+1$ blocks with probability
\begin{equation}\label{p-nk}
p_{n,k}:={n\choose k+1}  \frac{  \lambda_{n,k+1}}{\lambda_n}\,,\quad 1\leq k<n.
\end{equation}
Hence by the first collision event  $N_n$ decrements from state $n$ to $n-I_n$, where $I_n$ has distribution
${\mathbb P}\{I_n=k\}=p_{n,k}$.

For the remainder of this and in the next section we assume that ${\tt m}_{-2}<\infty$. For the simple $\Lambda$-coalescents $\lambda_n\to {\tt m}_{-2}$, which is the rate of the Poisson process of collision times of $\Pi_\infty$. Let $W$ be a random variable taking values in $(0,1)$ with distribution function
\begin{equation}\label{eq:W_distribution}
\mmp\{W\leq x\}\ =\ \frac{\int_{[1-x,\,1]} y^{-2}\Lambda({\rm d}y)}{{\tt m}_{-2}},\quad x\in[0,1].
\end{equation}
Using the weak law of large numbers for binomial random variables we infer
\begin{equation}\label{bino}
\lim_{n\to\infty}\,\sum_{k=1}^{\lfloor nx\rfloor }p_{n,k}= \frac{1}{{\tt m}_{-2}}\int_{[0,\,x]} 
\ y^{-2}\Lambda({\rm d}y)=\mmp\{1-W\leq x\},\quad x\in[0,1].
\end{equation}
Thus, the variable $W$ has the intuitive meaning of the asymptotic proportion $(n-I_n)/n$ of the blocks remaining after the first collision in $\Pi_n$.

Introduce the logarithmic moments
$$
\mu:=\me |\log W|,\quad \sigma^{2}:= {\rm Var}(|\log W|)
$$
which may be finite or infinite and note that
$$
\mu=\frac{1}{{\tt m}_{-2}}\int_{[0,\,1]}|\log(1-x)|x^{-2}\Lambda({\rm d}x).
$$

Our first result states the joint convergence in distribution of the collision spectrum. We set $X_{n,k}:=0$ for $k>n$.
\begin{thm}\label{thm:main_thm_stat}
Suppose ${\tt m}_{-2}<\infty$ and that the distribution of $|\log W|$ is non-arithmetic.
\begin{itemize}
\item[\rm (i)] If  $\mu<\infty$, then there exists a non-degenerate random vector $(X_{\infty,k})_{k\geq 2}$ such that
$$
(X_{n,k})_{k\geq 2}~\Longrightarrow~ (X_{\infty,k})_{k\geq 2},\quad n\to\infty,
$$
where  $\Longrightarrow$ denotes the weak convergence in the product space $\mr^{\infty}$.
\item[\rm(ii)] If $\mu=\infty$, then
$$
X_{n,k}\overset{\mmp}{\to} 0,\quad n\to\infty
$$
for every fixed $k\geq 2$.
\end{itemize}
\end{thm}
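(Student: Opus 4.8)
The plan is to work with the embedded block-counting chain $(R_j)_{j\ge 0}$, where $R_0=n$ and, given $R_j=m$, the next value is $R_{j+1}=m-I_m$ with $\mmp\{I_m=i\}=p_{m,i}$. A collision of $k$ blocks is exactly a jump of this chain by $k-1$, so
\[
X_{n,k}=\#\{j\ge 0:R_j>1,\ I_{R_j}=k-1\}.
\]
Since the chain is strictly decreasing, every state is visited at most once. Writing $h_n(m):=\mmp\{m\in\{R_0,R_1,\dots\}\}$ for the visit (Green's) function, the strong Markov property yields the identity
\[
\me[X_{n,k}]=\sum_{m=k}^{n}h_n(m)\,p_{m,k-1},\qquad p_{m,k-1}=\binom{m}{k}\frac{\lambda_{m,k}}{\lambda_m},
\]
and the whole argument is reduced to the behaviour of $h_n(m)$.

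On the logarithmic scale the chain is a \emph{perturbed} random walk: by \eqref{bino}, $I_m/m\Rightarrow 1-W$, so for large $R_{j-1}$ the increment $\log(R_{j-1}/R_j)$ is close in law to $|\log W|$, whose mean is $\mu$. The first task is to establish, by renewal theory for such perturbed walks (here the non-arithmeticity of $|\log W|$ enters), the three facts: (a) a uniform bound $h_n(m)\le C/m$ for all $n\ge m$; (b) if $\mu<\infty$, then $\lim_{n\to\infty}h_n(m)=h_\infty(m)$ exists and is strictly positive for every $m$; (c) if $\mu=\infty$, then $\lim_{n\to\infty}h_n(m)=0$ for every fixed $m$. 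This renewal analysis is the \emph{main obstacle}, precisely because $(\log R_j)$ is not an honest random walk: the one-step law $p_{m,\cdot}$ only converges to the law of $1-W$ as $m\to\infty$. I would handle it by coupling $(R_j)$ with a genuine multiplicative random walk driven by i.i.d.\ copies of $W$ and checking that the discrepancy does not alter the renewal limits; the local (lattice-to-continuous) passage rests on $p_{m,\lfloor mx\rfloor}\asymp m^{-1}$, reflecting that $I_m$ lives on the scale $m$.

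The second ingredient is a moment bound, uniform in $n$, that turns ${\tt m}_{-2}<\infty$ into precisely the right hypothesis. Summing a geometric series gives the algebraic identity $\sum_{m\ge k}\frac1m\binom mk(1-x)^{m-k}=\frac1k x^{-k}$, whence
\[
\sum_{m\ge k}\frac1m\binom mk\lambda_{m,k}=\frac1k\int_{[0,1]}x^{-2}\Lambda({\rm d}x)=\frac{{\tt m}_{-2}}{k}.
\]
Since $\lambda_m\to{\tt m}_{-2}>0$ is bounded away from $0$, combining this with $h_n(m)\le C/m$ gives $\me[X_{n,k}]\le C'/k$ for all $n$. Part (ii) is then immediate: if $\mu=\infty$ then $h_n(m)\to0$ pointwise, the summands in the expectation identity are dominated by the summable sequence $(C/m)p_{m,k-1}$, and dominated convergence gives $\me[X_{n,k}]\to0$; as $X_{n,k}$ is $\mz_{\ge0}$-valued, $\mmp\{X_{n,k}\ge1\}\le\me[X_{n,k}]\to0$, i.e.\ $X_{n,k}\tp0$.

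For part (i), with $\mu<\infty$, I would prove the joint convergence by truncation. Fix $M$ and split $X_{n,k}=X_{n,k}^{\le M}+X_{n,k}^{>M}$ according to the count at which the collision occurs. The same summation bound restricted to $m>M$ gives $\sup_n\me[X_{n,k}^{>M}]\le C\sum_{m>M}m^{-1}p_{m,k-1}\to0$ as $M\to\infty$, so the truncated spectrum approximates the full one uniformly in $n$. For the truncated part, let $V_n\in\{1,\dots,M\}$ be the first value $\le M$ taken by the chain; decomposing on the last state above $M$ gives $\mmp\{V_n=v\}=\sum_{m>M}h_n(m)p_{m,m-v}$, so by (b) and dominated convergence $V_n\Rightarrow V_\infty^{(M)}$ on the finite set $\{1,\dots,M\}$. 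By the Markov property the conditional law of $(X_{n,k}^{\le M})_{k}$ given $V_n=v$ equals the law of the collision spectrum of the kernel-chain started from $v$ and does not depend on $n$; hence $(X_{n,k}^{\le M})_k$ converges in distribution, jointly in $k$. Letting $M\to\infty$ and invoking the uniform truncation bound, the finite-dimensional distributions of $(X_{n,k})_{k\ge2}$ converge to those of a consistent limit $(X_{\infty,k})_{k\ge2}$, which is exactly convergence in $\mr^{\infty}$; the limit may be described as the collision spectrum of the decreasing chain entering from infinity with entrance law $(h_\infty(m))$. Non-degeneracy follows since $0<\me[X_{\infty,k}]=\sum_m h_\infty(m)p_{m,k-1}<\infty$ while $\mmp\{X_{\infty,k}=0\}>0$, so no coordinate is almost surely constant.
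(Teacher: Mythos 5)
Your overall architecture --- reducing everything to the visit probabilities $h_n(m)$ of the decreasing block-counting chain, truncating at a level $M$, and invoking the Markov property below $M$ --- is sound, and your treatment of part (i) (conditioning on the first state $\le M$) is a legitimate variant of the paper's argument, which instead conditions on the size $L_n$ of the last merger and runs a Cram\'er--Wold/uniform-integrability scheme; both routes feed on the same renewal-theoretic input. The first genuine gap is exactly that input: your facts (b) and (c) --- that $h_n(m)$ converges to a positive limit when $\mu<\infty$ and tends to $0$ when $\mu=\infty$ --- are precisely Theorems 2 and 3 of Kersting--Schweinsberg--Wakolbinger \cite{Kersting+Schweinsberg+Wakolbinger:2017+}, which the paper cites rather than reproves. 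You correctly flag this as ``the main obstacle'' and then dispose of it in one sentence (``couple with a multiplicative random walk and check that the discrepancy does not alter the renewal limits''). That is not a proof: the chain $(\log R_j)$ has a step distribution depending on the current state, and upgrading the weak convergence $I_m/m\Rightarrow 1-W$ to a local statement about hitting a single integer state is exactly where the non-arithmeticity hypothesis and all of the difficulty live. Either cite \cite{Kersting+Schweinsberg+Wakolbinger:2017+} explicitly or supply this argument; as written, the core of both parts of the theorem is assumed.

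The second gap is that your uniform bound (a), $h_n(m)\le C/m$ for all $n\ge m$, is false. Trivially $h_n(n)=1$; more seriously, $h_n(n-j)\ge p_{n,j}$, and under ${\tt m}_{-2}<\infty$ alone $p_{n,1}$ need not be $O(1/n)$: for the beta$(5/2,b)$-coalescent one computes $p_{n,1}\asymp n^{-1/2}$, because the law of $1-W$, namely ${\tt m}_{-2}^{-1}y^{-2}\Lambda({\rm d}y)$, has a density blowing up at $0$. Since (a) is what drives your domination step in part (ii) and the uniform truncation bound $\sup_n\me[X_{n,k}^{>M}]\to0$ in part (i), both steps need repair. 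What is true, and what the paper establishes in its Lemma 2.4 by a careful induction on the recursion for $a_n=\sum_m h(n,m)p_{m,\ell}\varphi(m)$, is an \emph{averaged} bound: for a suitable nondecreasing slowly varying $\varphi\to\infty$ with $\int_{[0,1]}y^{-2}\varphi(1/y)\Lambda({\rm d}y)<\infty$ (Corollary 2.3) one has $\sup_n\sum_m h(n,m)p_{m,\ell}\varphi(m)<\infty$, hence $\sup_n\sum_{m>M}h(n,m)p_{m,\ell}\le C/\varphi(M)\to0$. Your closed-form identity $\sum_{m\ge k}m^{-1}\binom{m}{k}\lambda_{m,k}={\tt m}_{-2}/k$ is correct and attractive, but it must be married to an estimate of this averaged type rather than to the pointwise bound (a). Finally, your non-degeneracy claim $\mmp\{X_{\infty,k}=0\}>0$ is asserted without justification; it is plausible (the entering chain can jump over every opportunity for a $k$-merger with positive probability) but deserves a sentence of proof.
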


\noindent
Note that the limit variable $X_{\infty,k}$ appearing in this theorem is not the number of $k$-collisions for the infinite coalescent.
For simple coalescents, partition $\Pi_\infty(t)$ has infinitely many blocks and every collision takes infinitely many of them.

The proof of Theorem \ref{thm:main_thm_stat} will be based on three lemmas.
\begin{lemma}\label{lem:lem1}
Let $\xi$ be a random variable with values in $[1,\infty)$. There exists a nondecreasing function $\varphi:[1,\infty)\to[0,\infty)$ slowly varying at $\infty$ such that
$$
\lim_{x\to\infty}\varphi(x)=\infty\quad\text{and}\quad \me \varphi(\xi)<\infty.
$$
\end{lemma}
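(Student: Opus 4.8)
The plan is to build $\varphi$ by hand on a geometric grid adapted to the tail of $\xi$, and then to read off slow variation and integrability directly from the construction. Write $\bar F(t):=\mmp\{\xi>t\}$ for the tail; since $\xi$ is finite almost surely, $\bar F(t)\to 0$ as $t\to\infty$, and this is the only feature of $\xi$ I will use. Using $\bar F\to 0$ I would first fix a strictly increasing sequence $1=x_0<x_1<x_2<\cdots$ with $x_n\to\infty$, chosen inductively so that \emph{simultaneously} $\bar F(x_n)\le 2^{-n}$ and $x_{n+1}\ge e\,x_n$. Both requirements merely say ``take $x_{n+1}$ large enough'', so they are compatible. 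I would then let $\varphi$ be the continuous, nondecreasing function with $\varphi(x_n)=n$ that is affine in $\log x$ on each block $[x_n,x_{n+1}]$; thus $\varphi(1)=0$, $\varphi\ge 0$, $\varphi$ is absolutely continuous, and $\varphi(x)\to\infty$ because $\varphi(x_n)=n$.

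For the integrability $\me\varphi(\xi)<\infty$, I would note that since $\varphi$ is absolutely continuous with $\varphi(1)=0$, Tonelli's theorem gives $\me\varphi(\xi)=\int_1^\infty \varphi'(t)\,\bar F(t)\,dt=\int_1^\infty \bar F(t)\,d\varphi(t)$. Splitting this over the grid and using that $\bar F$ is nonincreasing together with $\varphi(x_{n+1})-\varphi(x_n)=1$, the $n$-th block contributes at most $\bar F(x_n)\le 2^{-n}$, so that $\me\varphi(\xi)\le\sum_{n\ge 0}2^{-n}<\infty$. This step is essentially routine once the grid is in place.

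The step I expect to be the crux is verifying slow variation, and this is exactly where the geometric spacing $x_{n+1}\ge e\,x_n$ is exploited. On $(x_n,x_{n+1})$ the log-affine interpolation satisfies $t\varphi'(t)=(\log x_{n+1}-\log x_n)^{-1}\le 1$ almost everywhere, and $\varphi(t)\ge n$ there, whence the logarithmic derivative $\eta(t):=t\varphi'(t)/\varphi(t)$ obeys $\eta(t)\le 1/n\to 0$. Since $\log\varphi$ is absolutely continuous, the fundamental theorem of calculus yields the Karamata representation $\varphi(x)=\varphi(x_\ast)\exp\bigl(\int_{x_\ast}^{x}\eta(t)\,t^{-1}\,dt\bigr)$ with $\eta(t)\to 0$, which identifies $\varphi$ as (normalised) slowly varying at $\infty$. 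Collecting the three properties completes the proof; I would emphasise in a closing remark that no moment assumption on $\xi$ is needed, only $\bar F(t)\to 0$, so the statement holds for every almost surely finite $\xi$.
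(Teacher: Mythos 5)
Your proof is correct, but it takes a genuinely different route from the paper's. The paper's proof is a two-step soft argument: first it invokes the standard fact that for any a.s.\ finite $\xi$ there is \emph{some} function $\psi\to\infty$ with $\me\psi(\xi)<\infty$ (the de la Vall\'ee-Poussin-type observation), and then it appeals to a cited construction (Lemma 4.4 of Fa\"y et al.) for the purely deterministic fact that beneath any function diverging to $\infty$ one can fit a nondecreasing slowly varying function still diverging to $\infty$. You instead build $\varphi$ in one shot, directly from the tail $\bar F$ of $\xi$, via a log-affine interpolation on a grid chosen so that $\bar F(x_n)\le 2^{-n}$ and $x_{n+1}\ge e\,x_n$. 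All three verifications go through: the Tonelli identity $\me\varphi(\xi)=\int_1^\infty\varphi'(t)\bar F(t)\,dt$ combined with $\varphi(x_{n+1})-\varphi(x_n)=1$ gives $\me\varphi(\xi)\le\sum_n 2^{-n}$; the spacing $\log x_{n+1}-\log x_n\ge 1$ gives $t\varphi'(t)\le 1$ and hence $\eta(t)\le 1/n\to 0$, so the Karamata representation (based at, say, $x_\ast=x_1$ so that $\varphi(x_\ast)=1>0$) certifies normalised slow variation; and monotonicity and divergence are built in. What your version buys is a fully self-contained, quantitative argument (with the explicit bound $\me\varphi(\xi)\le 2$) that does not outsource the key construction to a reference; what the paper's version buys is modularity, cleanly separating the probabilistic input from the deterministic slowly-varying-minorant lemma, at the cost of being non-constructive as written. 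Your closing remark that only $\bar F(t)\to 0$ is used is consistent with the lemma's hypotheses.
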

\begin{proof}
This follows from the two rather obvious facts. First, there exists a positive function $\psi$ diverging to $\infty$ such that $\me \psi(\xi)<\infty$. Second, there exists a nondecreasing slowly varying function $\varphi$ diverging to $\infty$ and satisfying $\lim_{x\to\infty}\,(\varphi(x)/\psi(x))=0$. One possible construction of such a $\varphi$ can be found in the proof of Lemma 4.4 in \cite{Fay_etal:2006}.
\end{proof}
Applying Lemma \ref{lem:lem1} to $\xi=1/(1-W)$ gives the following.
\begin{cor}\label{cor:cor1}
If ${\tt m}_{-2}<\infty$ then there exists a nondecreasing function $\varphi:[1,\infty)\to[0,\infty)$ slowly varying at $\infty$ such that
$\lim_{x\to\infty}\varphi(x)=\infty$ and
\begin{equation}\label{int-phi}
\quad \me \varphi(1/(1-W))=\int_{[0,\,1]}
y^{-2}\varphi(y^{-1})\Lambda({\rm d}y)<\infty.
\end{equation}
\end{cor}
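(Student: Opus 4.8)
The plan is to obtain the desired $\varphi$ as a direct output of Lemma \ref{lem:lem1}, applied to the specific random variable $\xi:=1/(1-W)$. First I would verify that $\xi$ is an admissible input. Since ${\tt m}_{-2}<\infty$, the right-hand side of \eqref{eq:W_distribution} is a genuine probability distribution function (it increases from $0$ at $x=0$ to $1$ at $x=1$, using that $\Lambda$ has no atom at $1$), so $W$ is a well-defined random variable with values in $(0,1)$. Consequently $1-W\in(0,1)$ and $\xi=1/(1-W)$ takes values in $(1,\infty)\subset[1,\infty)$, exactly as Lemma \ref{lem:lem1} requires. Feeding $\xi$ into the lemma then yields a nondecreasing, slowly varying function $\varphi:[1,\infty)\to[0,\infty)$ with $\varphi(x)\to\infty$ as $x\to\infty$ and $\E\varphi(\xi)=\E\varphi(1/(1-W))<\infty$. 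This already supplies every qualitative property of $\varphi$ asserted in the corollary.

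The only remaining task is to recast the expectation as the integral in \eqref{int-phi}, which I would do by the transfer theorem. From \eqref{bino} one reads off that the law of $1-W$ is ${\tt m}_{-2}^{-1}y^{-2}\Lambda(\mathrm{d}y)$ on $[0,1]$. Viewing $\varphi(1/(1-W))$ as a function of $1-W$ and integrating against this law shows that $\E\varphi(1/(1-W))$ equals $\int_{[0,1]}y^{-2}\varphi(y^{-1})\Lambda(\mathrm{d}y)$ times the positive finite constant ${\tt m}_{-2}^{-1}$; note that $y^{-1}\in[1,\infty)$ for $y\in(0,1]$, so $\varphi(y^{-1})$ is well-defined and, since $\varphi$ is nondecreasing, the weight $\varphi(y^{-1})$ is largest precisely where $y$ is close to $0$. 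Because $0<{\tt m}_{-2}<\infty$, the finiteness of the left-hand side transfers to the integral, which establishes the finiteness claim in \eqref{int-phi}.

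There is no genuinely hard step: the corollary is a routine specialization of Lemma \ref{lem:lem1}, and all content beyond the lemma is a single change of variables. The only points demanding even momentary care are the verification that $\xi$ lands in $[1,\infty)$ (which hinges on $W$ being $(0,1)$-valued, itself a consequence of ${\tt m}_{-2}<\infty$) and the bookkeeping of the normalizing constant ${\tt m}_{-2}$ when passing from the expectation to the integral.
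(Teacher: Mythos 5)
Your proposal is correct and is exactly the paper's argument: the authors prove the corollary by the single remark ``Applying Lemma \ref{lem:lem1} to $\xi=1/(1-W)$ gives the following,'' and your verification that $\xi$ takes values in $[1,\infty)$ and that the expectation transfers to the stated integral (up to the harmless positive constant ${\tt m}_{-2}^{-1}$, which the paper's displayed equality silently absorbs) is just the routine bookkeeping the paper leaves implicit.
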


\begin{lemma}\label{lem:lem2}
Let $(a_k)_{k\in\mn}$ be a sequence defined recursively as follows:
$$
a_1=a_2=\cdots=a_\ell=0,\quad a_n=p_{n,\ell}\,\varphi(n)+\sum_{k=1}^{n-1}p_{n,k}a_{n-k},\quad n>\ell,
$$
where $\ell\in\mn$ is fixed, and $\varphi$ is a positive function slowly varying at $\infty$ such that {\rm \,(\ref{int-phi})\,} holds.
Then the sequence $(a_n)$ is bounded.
\end{lemma}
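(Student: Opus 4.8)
The plan is to read $a_n$ probabilistically and then bound the resulting expectation by a renewal-type estimate. Let $(V_j)_{j\ge0}$ be the embedded strictly decreasing Markov chain of block counts, so that $V_0=n$ and, from a state $m$, the chain decreases by $k$ (a merger of $k+1$ blocks) with probability $p_{m,k}$. Iterating the recursion and using that $(V_j)$ reaches a state $\le\ell$ after finitely many steps, first-step analysis identifies $a_n$ as the expected $\vf$-weighted number of $(\ell+1)$-collisions:
\begin{equation*}
a_n=\me\Bigl[\sum_{j\ge0}\vf(V_j)\ind{V_j-V_{j+1}=\ell}\Bigr]=\sum_{m=\ell+1}^{n}\vf(m)\,p_{m,\ell}\,\mmp\{m\in\mathcal V\},
\end{equation*}
where $\mathcal V=\{V_0,V_1,\dots\}$ is the random set of visited states and the second equality uses that each state is visited at most once. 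Thus boundedness amounts to controlling this weighted sum uniformly in $n$.

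I would split the state space into dyadic layers $L_i=(2^i,2^{i+1}]$. Since $\vf$ is slowly varying, the uniform convergence theorem gives $\sup_{m\in L_i}\vf(m)\le C\vf(2^i)$ for all large $i$, whence
\begin{equation*}
a_n\le C\sum_i\vf(2^i)\,p_i^\ast\,\me[Z_i],\qquad p_i^\ast:=\sup_{m\in L_i}p_{m,\ell},\quad Z_i:=\#\{j:V_j\in L_i\}.
\end{equation*}
Here $\me[Z_i]$ is the expected number of collisions occurring while the chain lies in a fixed dyadic layer. Because the relative decrement converges to $1-W$ by \eqref{bino}, the process $(\log V_j)$ descends like a random walk with step law $|\log W|$ and strictly positive drift $\mu=\me|\log W|\in(0,\infty]$; coupling $(\log V_j)$ with the associated random walk $S_k=\sum_{i=1}^{k}|\log W_i|$ and invoking a Lorden-type bound on the number of renewals in a window of fixed length $\log 2$ yields $\sup_{i}\sup_{n}\me[Z_i]<\infty$. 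This uniform renewal estimate is the main obstacle: the increments of $(\log V_j)$ are state-dependent rather than i.i.d., so the comparison with $(S_k)$ has to be made carefully, and one uses the non-arithmeticity of $|\log W|$ to rule out lattice effects. It then remains to prove $\sum_i\vf(2^i)\,p_i^\ast<\infty$.

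For the final step I would return to the integral form of the rates. Writing $\nu(\mathrm dx)=x^{-2}\Lambda(\mathrm dx)$, a finite measure of total mass ${\tt m}_{-2}$, one has from \eqref{lrates} and \eqref{p-nk} that
\begin{equation*}
\binom{m}{\ell+1}\lambda_{m,\ell+1}=\int_{[0,1]}\mmp\{\mathrm{Bin}(m,x)=\ell+1\}\,\nu(\mathrm dx),\qquad p_{m,\ell}=\frac{1}{\lambda_m}\int_{[0,1]}\mmp\{\mathrm{Bin}(m,x)=\ell+1\}\,\nu(\mathrm dx),
\end{equation*}
and $\lambda_m\to{\tt m}_{-2}>0$ gives $\inf_{m\ge2}\lambda_m>0$. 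Substituting this bound into $p_i^\ast$, interchanging summation and integration, and using that $m\mapsto\mmp\{\mathrm{Bin}(m,x)=\ell+1\}$ concentrates around $m\approx(\ell+1)/x$ together with the slow variation of $\vf$, the series is dominated by
\begin{equation*}
C\int_{[0,1]}\vf(1/x)\,\nu(\mathrm dx)=C\int_{[0,1]}y^{-2}\vf(y^{-1})\,\Lambda(\mathrm dy)<\infty,
\end{equation*}
which is finite by \eqref{int-phi}. Combining the three steps gives $\sup_n a_n<\infty$, as claimed.
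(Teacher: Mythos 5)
Your route is genuinely different from the paper's. The paper proves by induction the a priori bound $a_n\le C_\ell\sum_{m=\ell+1}^n p_{m,\ell}\,\vf(m)/m$ and then shows that this series converges via an Abelian theorem for power series. You instead unroll the recursion into the exact identity $a_n=\sum_{m=\ell+1}^n\vf(m)\,p_{m,\ell}\,h(n,m)$, with $h(n,m)$ the probability that the block-counting chain visits $m$ (the identity is correct, and is precisely the quantity into which the paper later feeds this lemma in the proof of Theorem \ref{thm:main_thm_stat}), and then split into dyadic layers. Your final series $\sum_i\vf(2^i)p_i^\ast$ is, up to constants, the same series $\sum_m p_{m,\ell}\,\vf(m)/m$ as in the paper, and your treatment of it (binomial representation of $\lambda_{m,\ell+1}$, concentration of ${\rm Bin}(m,x)$ near $m\approx(\ell+1)/x$, then \eqref{int-phi}) is a legitimate sketch of the same Abelian computation. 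What your approach buys is transparency: the induction is replaced by the clean probabilistic statement that the chain spends $O(1)$ expected steps in each dyadic layer, which is where all the content sits.

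That statement, $\sup_i\sup_n\me Z_i<\infty$, is the one genuine gap. You flag it yourself, but the tool you propose --- coupling $(\log V_j)$ with the i.i.d.\ walk $S_k=\sum_{i\le k}|\log W_i|$ and invoking a Lorden-type bound --- does not go through as stated: the decrements of $(\log V_j)$ are state-dependent and only converge \emph{in distribution} to $|\log W|$, and convergence in distribution gives no uniform-in-state stochastic lower bound on the steps, which is what an upper bound on the number of visits to a fixed window requires. (Non-arithmeticity of $|\log W|$, which you invoke, is not a hypothesis of the lemma and is not needed.) The estimate is nevertheless true and can be closed elementarily. Since $\Lambda$ has no atom at $0$ and is nonzero, there is $\ve>0$ with $\mmp\{1-W>\ve\}=:2\delta>0$, and then by \eqref{bino} there is $m_0$ such that $\mmp\{I_m>\ve m\}\ge\delta$ for all $m\ge m_0$. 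From any state $m\in L_i$ with $2^i\ge m_0$, the Markov property gives a jump of size greater than $\ve m\ge\ve 2^i$ with probability at least $\delta$ regardless of the past, and at most $\lceil 1/\ve\rceil$ such jumps can occur before the chain leaves $L_i$; hence the number of visits to $L_i$ is stochastically dominated by the waiting time for $\lceil 1/\ve\rceil$ successes in Bernoulli$(\delta)$ trials, uniformly in $i$ and in the starting point $n$. Layers with $2^{i+1}<m_0$ contribute at most $m_0$ visits trivially. With this replacement your argument is complete.
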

\begin{proof}
Choose $\theta\in(0,1)$ such that $\int_{[1-\theta,\,1]}y^{-2}\Lambda({\rm d}y)>0$.
We first prove by induction that there is a constant $C_\ell>0$ such that
$$
a_n\leq C_\ell\sum_{m=\ell+1}^{n}\frac{p_{m,\ell}\varphi (m)}{m},\quad n>\ell.
$$
By adjusting $C_\ell$ if necessary it is enough to show this  for $n>n_0$, where $n_0$ is any fixed integer. We have
\begin{align*}
a_n&= p_{n,\ell}\,\varphi(n)+\sum_{k=1}^{n-1}p_{n,n-k}a_k=p_{n,\ell}\,\varphi(n)+\sum_{k=\ell+1}^{n-1}p_{n,n-k}a_k\\
&\leq p_{n,\ell}\,\varphi(n)+C_\ell\sum_{k=\ell+1}^{n-1}p_{n,n-k}\sum_{m=\ell+1}^k\frac{p_{m,\ell}\,\varphi(m)}{m}=p_{n,\ell}\,\varphi (n)+C_\ell\sum_{m=\ell+1}^{n-1}\frac{p_{m,\ell}\,\varphi(m)}{m}\sum_{k=1}^{n-m}p_{n,k}.
\end{align*}
We need to check that for some $C_\ell>0$ the inequality
$$
p_{n,\ell}\varphi(n)+C_\ell\sum_{m=\ell+1}^{n-1}\frac{p_{m,\ell}\,\varphi(m)}{m}\sum_{k=1}^{n-m}p_{n,k}\leq C_\ell \sum_{m=\ell+1}^{n}\frac{p_{m,\ell}\,\varphi(m)}{m}
$$
holds for large enough $n$. This is equivalent to
\begin{eqnarray}\label{eq:lem2_ineq1}
p_{n,\ell}\,\varphi(n)\leq C_\ell \sum_{m=\ell+1}^{n}\frac{p_{m,\ell}\,\varphi(m)}{m}\sum_{j=1}^{m-1}p_{n,n-j}
\end{eqnarray}
for large enough $n$. We have
\begin{align}\label{eq:lem2_ineq2}
\sum_{m=\ell+1}^n \frac{p_{m,\ell}\,\varphi(m)}{m}\sum_{j=1}^{m-1}p_{n,n-j}&
\geq \sum_{m=\lfloor\theta n\rfloor}^n \frac{p_{m,\ell}\varphi(m)}{m}\sum_{j=1}^{m-1}p_{n,n-j} \nonumber  \\
&\geq \inf_{\lfloor\theta n\rfloor \leq m\leq n}\left({m\choose \ell+1}\frac{\varphi(m)}{m\lambda_m}\right)\lambda_{n,\ell+1}\sum_{k=\lfloor\theta n\rfloor}^{n}\sum_{j=1}^{k-1}p_{n,n-j}.
\end{align}
Since $\lambda_n\to {\tt m}_{-2}\in(0,\infty)$, the sequence $\left({m\choose \ell+1} \frac{\varphi(m)}{m\lambda_m}\right)$ is regularly varying with index $\ell$. Therefore,
\begin{equation}\label{eq:lem2_ineq3}
\inf_{\lfloor\theta n\rfloor\leq m\leq n}\left({m\choose \ell+1} \frac{\varphi(m)}{m\lambda_m}\right)\geq C'_\ell {n\choose \ell+1}\frac{\varphi(n)}{n}
\end{equation}
for some $C'_\ell>0$ and large enough $n$, by \cite{BGT} (Theorem 1.5.3). Finally,
\begin{equation}\label{eq:lem2_ineq4}
\liminf_{n\to\infty}\frac{1}{n}\sum_{k=\lfloor\theta n\rfloor}^{n}\sum_{j=1}^{k-1}p_{n,n-j}>0
\end{equation}
is a consequence of
$$
\liminf_{n\to\infty}\frac{1}{n}\sum_{k=\lfloor\theta n\rfloor}^{n}\sum_{j=1}^{k-1}p_{n,n-j}\geq (1-\theta)\liminf_{n\to\infty}\sum_{j=1}^{\lfloor\theta n\rfloor -1}p_{n,n-j}>0,
$$
where the last inequality follows from \eqref{bino} and our choice of $\theta$.

Inequality \eqref{eq:lem2_ineq1} now follows from \eqref{eq:lem2_ineq2}, \eqref{eq:lem2_ineq3}, \eqref{eq:lem2_ineq4}, definition (\ref{p-nk}) and the convergence of $\lambda_n$.

Next, we argue that the series $\sum_{m\leq \ell+1}
\frac{p_{m,\ell}\varphi(m)}{m}$ converges. Indeed, since $\lambda_n=O(1)$ we can find some $C''_\ell>0$ such that
\begin{align*}
\sum_{m\geq \ell+1}
\frac{p_{m,\ell}\,\varphi(m)}{m}&\leq C''_\ell \sum_{m\geq \ell+1}
{m^\ell \varphi(m)\lambda_{m,\ell+1}}\\
&=C''_\ell\int_{[0,\,1]} x^{\ell-1}(1-x)^{-\ell-1}\left(\sum_{m\geq \ell+1}
m^\ell\, \varphi(m)(1-x)^m\right)\Lambda({\rm d}x).
\end{align*}
By dominated convergence the integrand is bounded in some left vicinity of $1$.
By the Abelian theorem for power series, see, e.g., \cite{Feller:1973} (Chapter XIII.5, Theorem 5),
$$
\sum_{m\geq \ell+1} m^\ell \varphi (m)(1-x)^m~\sim~ {\rm const}\cdot x^{-\ell-1}\,\varphi(1/x),\quad x\to 0+
$$
which in combination with \eqref{int-phi} shows that the integral converges in some right vicinity of $0$.
\end{proof}

Now we are in position to prove Theorem \ref{thm:main_thm_stat}.

\begin{proof}[Proof of \,{\rm Theorem \ref{thm:main_thm_stat}}]
\noindent
{\sc Case $\mu<\infty$}. According to the Cram\'{e}r-Wold device it is enough to show that
$$
Z^{(N)}_n:=\sum_{k=2}^{N}\alpha_k X_{n,k}\dod \sum_{k=2}^{N}\alpha_k X_{\infty,k},\quad n\to\infty
$$
for every choice of  $N\geq 2$ and nonnegative reals $\alpha_2,\ldots,\alpha_N$.
Observe the stochastic recurrence
$$
Z^{(N)}_1=0,\quad Z^{(N)}_n\od \sum_{k=2}^{N}\alpha_k\1_{\{I_n=k-1\}}+\widehat{Z}^{(N)}_{n-I_n},\quad n\geq 2,
$$
where $I_n$ is the size of the first decrement of $N_n$, $\widehat{Z}_m\stackrel{{\rm d}}{=}Z_m$, and $I_n$ is independent of $\widehat{Z}_m$'s.
Let $A$ be the span of $\alpha_2,\dots,\alpha_N$ with integer weights. For $z\in A$ we get
\begin{eqnarray*}
&&\hspace{-0.6cm}\mmp\{Z^{(N)}_n=z\}=\sum_{m=1}^{n-1} p_{n,n-m}\mmp\left\{Z_m^{(N)}=z-\sum_{k=2}^{N}\alpha_k\delta_{n-m,k-1}\right\}\\
&&\hspace{-0.3cm}=\sum_{m=1}^{n-1}p_{n,n-m}\,\mmp\{Z_m^{(N)}=z\}+\sum_{m=1}^{n-1}p_{n,n-m}\left(\mmp\left\{Z_m^{(N)}=z-\sum_{k=2}^{N}\alpha_k\delta_{n-m,k-1}\right\}-\mmp\{Z_m^{(N)}=z\}\right)\\
&&\hspace{-0.3cm}=\sum_{m=1}^{n-1}p_{n,n-m}   \mmp\{Z_m^{(N)}=z\}+\sum_{m=n-N+1}^{n-1}p_{n,n-m} \left(\mmp\left\{Z_m^{(N)}=z-\alpha_{n-m+1}\right\}-\mmp\{Z_m^{(N)}=z\}\right)\\
&&\hspace{-0.3cm}=\sum_{m=1}^{n-1}p_{n,n-m}\mmp\{Z_m^{(N)}=z\}+\sum_{m=2}^{N}p_{n,m-1} \left(\mmp\left\{Z_{n-m+1}^{(N)}=z-\alpha_{m}\right\}-\mmp\{Z_{n-m+1}^{(N)}=z\}\right)\\
&&\hspace{-0.3cm}=:\sum_{m=1}^{n-1}p_{n,n-m}\mmp\{Z_m^{(N)}=z\}+s^{(N)}_{n,z},
\end{eqnarray*}
where $\delta_{x,y}$ is the Kronecker delta.

For  $k\leq n$ let $h(n, k)$ be the probability that the block-counting process $N_n$
ever visits state $k$. Then
$$
\mmp\{Z^{(N)}_n=z\}=\delta_{z,0}+\sum_{k=2}^{n}h(n,k)s^{(N)}_{k,z},\quad z\in A.
$$
For $L_n$ the number of blocks involved in the last collision we have
$$
{\mmp\{L_n=k\}}=h(n,k) {p_{k,k-1}}.
$$
Thus,
$$
\mmp\{Z^{(N)}_n=z\}=\delta_{z,0}+\me f^{(N)}_z(L_n),\quad z\in A,
$$
where
$$
f^{(N)}_z(m):=\frac{s^{(N)}_{m,z}}{p_{m,m-1}},\quad m\geq 2.
$$
By \cite{Kersting+Schweinsberg+Wakolbinger:2017+} (Theorem 2)
$L_n$ converges in distribution to a random variable $L_\infty$. Hence,
$$
\lim_{n\to\infty}\mmp\{Z^{(N)}_n=z\}=\delta_{z,0}+\me f^{(N)}_z(L_{\infty}),\quad z\in A,
$$
provided that the family $(f^{(N)}_z(L_{n}))_{n\in\mn}$ is uniformly integrable. In view of
$$
|f^{(N)}_z(m)|\leq \frac{\sum_{k=1}^{N-1}p_{m,k}}{p_{m,m-1}},\quad m\geq 2,
$$
it suffices to check uniform integrability of $(g^{(k)}_1(L_{n}))_{n\in\mn}$ for every fixed $k\in\mn$, where $g_1^{(k)}(m):=p_{m,k}/p_{m,m-1}$. According to
the Vall\'{e}e--Poussin criterion in the form given in
\cite{Shiryaev:2007} (Lemma 3, p.~ 267) the latter is secured by
\begin{equation}\label{eq:ui1}
\sup_{n\in\mn}\me \left(g_1^{(k)}(L_{n}) \varphi (\log^{+} g_1^{(k)}(L_{n}))\right)<\infty
\end{equation}
with a nondecreasing slowly varying function $\varphi$ as given in Corollary \ref{cor:cor1}.
We will show even more, namely that
\begin{equation}\label{eq:ui2}
\sup_{n\in\mn}\me g_2^{(k)}(L_{n}) <\infty,
\end{equation}
where $g_2^{(k)}(m):=\frac{p_{m,k}}{p_{m,m-1}}\varphi  (|\log p_{m,m-1}|)$. The expectation under the supremum can be written as
$$
\me g_2^{(k)}(L_{n})=\sum_{m=2}^{n}h(n,m)p_{m,k} \varphi(|\log p_{m,m-1}|).
$$
We use Jensen's inequality
\begin{eqnarray*}
-\log  p_{m,m-1}&=&-\log \frac{\int_{[0,\,1]} x^{m-2}\Lambda({\rm d}x)}{\Lambda([0,1])}-\log\Lambda([0,1]) +\log \lambda_m\\&\leq&
(m-2)\frac{\int_{[0,\,1]}|\log x|\Lambda({\rm d}x)}{\Lambda([0,1])}-\log\Lambda([0,1]) +\log \lambda_m
\end{eqnarray*}
in combination with $\lim_{m\to\infty}\,\lambda_m={\tt m}_{-2}$ to conclude that
$$
-\log p_{m,m-1} \leq {\rm const}\cdot m,\quad m\geq 2
$$
and thereupon
$$
\me g_2^{(k)}(L_{n})\leq {\rm const} \sum_{m=2}^{n}h(n,m) p_{m,k}  \varphi(m)
$$
by monotonicity of $\varphi$. The last sum is uniformly bounded by Lemma \ref{lem:lem2}.

{\sc Case $\mu=\infty$.} This follows immediately from the observation
$$
\lim_{n\to\infty}h(n,m)=0,\quad m\in\mn
$$
which is a consequence of \cite{Kersting+Schweinsberg+Wakolbinger:2017+} (Theorem 3).
\end{proof}

\noindent \textbf{Remark}. Under the assumptions of Theorem \ref{thm:main_thm_stat} we also have
convergence of all joint moments: for every $N\geq 2$ and $m_2,m_3,\ldots,m_N\geq 0$,
$$
\lim_{n\to\infty}\me(X^{m_2}_{n,2}X^{m_3}_{n,3}\cdots X^{m_N}_{n,N})=\me(X^{m_2}_{\infty,2}X^{m_3}_{\infty,3}\cdots X^{m_N}_{\infty,N}).
$$
In order to see this it is enough to check that
\begin{equation}\label{eq:moments_simple_coal}
\sup_{n\geq 2}\me X^m_{n,k}<\infty
\end{equation}
for all $k\geq 2$ and all $m\in\mathbb{N}$. Uniform integrability of the family
$(X^{m_2}_{n,2}X^{m_3}_{n,3}\cdots X^{m_N}_{n,N})_{n\geq 2}$ will then follow from H\"{o}lder's inequality. While condition \eqref{eq:moments_simple_coal} for $m=1$ follows from the recursion
$$
\me X_{n,k}=p_{n,k-1}+\sum_{j=1}^{n-k}p_{n,n-j}\me X_{j,k},\quad n\geq 2,
$$
for $m\geq 2$ it is checked by induction. We omit the details.

Theorem \ref{thm:main_thm_stat} is a pure existence result. Nevertheless, for beta$(3,b)$-coalescents it is possible to describe the asymptotic joint distribution of the collision spectrum explicitly. This result is strikingly similar to the classic Poisson limit for the small-block counts of Ewens' partitions \cite{ABT}, although we do not see a direct connection.

\noindent \textbf{Example} (Theorem 3.5 in \cite{Moehle:2017+}). Suppose $\Lambda$ is a {\rm beta}$(3,b)$-distribution. Then
$$
(X_{n,k})_{k\geq 2}~\Longrightarrow~ (X_{\infty, k})_{k\geq 2},\quad n\to\infty,
$$
where $(X_{\infty, k})_{k\geq 2}$ are independent with $X_{\infty, k}\od {\rm Poisson}(b/(k-1))$.

\section{Simple coalescents: functional limits}\label{Sect3}

A consequence of Theorem \ref{thm:main_thm_stat} is that the contribution of  $X_{n,k}$ to $X_n$ for every fixed $k$ remains bounded as $n$ grows.
In this section we prove functional limit theorems for the process of cumulative counts $(X_n(s))_{s\in[0,1]}$ defined by \eqref{cumul-count}.
The cases of finite and infinite $\mu$ are treated separately (Theorems \ref{main1} and \ref{main2}).

The process $(X_n(s))_{s\in[0,1]}$ has paths belonging to the Skorohod space $D[0,1]$  of c\`{a}dl\`{a}g functions.
We endow $D[0,1]$ with either the $J_1$- or the $M_1$-topology and denote the associated weak convergence of probability measures  by $\overset{J_{1}}{\Longrightarrow}$ and $\overset{M_{1}}{\Longrightarrow}$, respectively.
\begin{thm}\label{main1}
Assume  ${\tt m}_{-2}<\infty$ and that for some $a>0$
\begin{equation}\label{eq:m_minus_2_log_finite}
\me |\log (1-W)|^{a}=    \frac{1}{{\tt m}_{-2}}\int_{[0,\,1]}
|\log x|^{a} x^{-2}\Lambda({\rm d}x)<\infty.
\end{equation}
Suppose further that $\mu<\infty$ and set
\begin{align*}
u_{n}(s)&\ :=\ \mu^{-1}\int_{(1-s)\log n}^{\log n}\mmp\{|\log(1-W)|\le y\}\ {\rm d}y,\\
v_{n}(s)&\ :=\ \mu^{-1}\int_{(1-s)\log n}^{\log n}\mmp\{|\log(1-W)|> y\}\ {\rm d}y\ =\ \mu^{-1}s\log n - u_{n}(s)
\end{align*}
for $s\in [0,1]$.
\begin{itemize}
\item[\rm (A1)] If $\sigma^2<\infty$, then as $n\to\infty$
\begin{equation*}\label{clt_finite_variance}
\left(\frac{X_{n}(s)-u_{n}(s)}{\sqrt{\mu^{-3}\sigma^2\log
n}}\right)_{s\in[0,1]}\ \overset{J_{1}}{\Longrightarrow}\
(B(s))_{s\in[0,1]},
\end{equation*}
where $(B(s))_{s\in [0,1]}$ is a standard Brownian motion.
\item[\rm (A2)] If $\sigma^2=\infty$ and
\begin{equation*}\label{normalization_sv_sec_moment}
\me (\log W)^{2}\1_{\{|\log W|\le x\}}\ \sim \ \ell(x),\quad x\to\infty,
\end{equation*}
for some function $\ell$ slowly varying at infinity, then as $n\to\infty$
\begin{equation*}\label{clt_sv_sec_moment}
\left(\frac{X_{n}(s)-u_{n}(s)}{\mu^{-3/2}c(\log n)}\right)_{s\in[0,1]}\ \overset{J_{1}}{\Longrightarrow}\ (B(s))_{s\in[0,1]},
\end{equation*}
where $c$ is a positive function satisfying $\lim_{x\to\infty}
(c(x))^{-2} x\ell(c(x))=1$.
\item[\rm (A3)] If
\begin{equation}\label{normalization_rv_sec_moment}
\mmp\{|\log W|>x\}\ \sim\ x^{-\alpha}\ell(x),\quad x\to\infty,
\end{equation}
for some $\alpha\in (1,2)$ and some $\ell$ slowly varying at
infinity, then as $n\to\infty$
\begin{equation*}\label{clt_rv_sec_moment}
\left(\frac{X_{n}(s)-u_{n}(s)}{\mu^{-(\alpha+1)/\alpha}c(\log
n)}\right)_{s\in [0,1]}\ \overset{M_{1}}{\Longrightarrow}\
(S_{\alpha}(s))_{s\in[0,1]},
\end{equation*}
where $c$ is a positive function satisfying
$\lim_{x\to\infty} (c(x))^{-\alpha} x\ell(c(x))=1$ and
$(S_{\alpha}(s))_{s\in [0,1]}$ is a spectrally negative $\alpha$-stable L\'{e}vy process such that $S_{\alpha}(1)$ has the characteristic function
\begin{equation}\label{cf_st}
u\ \mapsto\ \exp\left\{-|u|^\alpha
\Gamma(1-\alpha)\left(\cos(\pi\alpha/2)+i\sin(\pi\alpha/2)\, {\rm
sgn}(u)\right)\right\}, \ u\in\mr
\end{equation}
with $\Gamma$ being the gamma function.
\end{itemize}
\end{thm}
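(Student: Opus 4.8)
The plan is to reduce the study of $(X_n(s))_{s\in[0,1]}$ to a functional limit theorem for the counting process of a perturbed random walk, and to read off the three regimes from the classical limit theorems for renewal and first-passage processes. Let $W_1,W_2,\dots$ be independent copies of $W$ and set $S_0:=0$, $S_j:=\sum_{i=1}^{j}|\log W_i|$, a zero-delayed random walk with nonnegative increments of mean $\mu$. The heuristic behind the coupling is that the state $R_j$ of the block-counting process after its $j$-th collision is of order $n\exp(-S_j)$: by \eqref{bino} a collision in state $m$ removes a proportion $\approx 1-W$ of the blocks, so $\log R_j$ decreases in increments distributed approximately as $|\log W|$. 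Moreover the $j$-th collision merges $R_{j-1}-R_j+1\approx R_{j-1}(1-W_j)$ blocks, and taking logarithms this number is at most $n^s$ precisely when $S_{j-1}+\xi_j\ge(1-s)\log n$, where $\xi_j:=|\log(1-W_j)|$ is a perturbation independent of the future of the walk whose tail is controlled by \eqref{eq:m_minus_2_log_finite}. Thus, up to the coupling error, $X_n(s)$ equals the number of indices $j$ occurring before absorption with $S_{j-1}+\xi_j\ge(1-s)\log n$. I would make this rigorous by invoking the coupling of the block-counting process with $(S_j)$ available for simple coalescents and checking that the total error is uniformly negligible on the scales below.

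Next I would identify the centering as the asymptotic mean $\me X_n(s)$. Writing $\nu(t):=\#\{j\ge0:S_j\le t\}$ for the renewal counting function and $U$ for the renewal function, one has $X_n\approx\nu(\log n)$ for the total number of collisions and, away from the extremes in $s$, $X_n(s)\approx\nu(\log n)-\nu((1-s)\log n)$, whose leading term $\mu^{-1}s\log n$ matches $u_n(s)+v_n(s)$. Computing the mean through the renewal measure, $\me X_n(s)=\int_{[0,\log n]}\mmp\{\xi\ge(1-s)\log n-x\}\,U({\rm d}x)$, using $U({\rm d}x)\sim\mu^{-1}{\rm d}x$, and crucially correcting the lower boundary (a genuine collision always merges at least two blocks, whereas the naive multiplicative approximation would spuriously count sub-binary mergers near $s=0$), yields exactly the refined centering $u_n(s)=\mu^{-1}\int_{(1-s)\log n}^{\log n}\mmp\{\xi\le y\}\,{\rm d}y$, the discrepancy $v_n(s)$ being the contribution of the perturbation $\xi$. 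Centering by $u_n(s)$ is therefore centering by the true mean, which is why $u_n$ rather than $\mu^{-1}s\log n$ appears and why no smallness of $v_n$ needs to be imposed.

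The fluctuations around $u_n(s)$ are then governed solely by the unperturbed walk, since each $\xi_j$ moves its threshold crossing by an $O(1)$ amount and, under \eqref{eq:m_minus_2_log_finite}, contributes nothing on the scales $\sqrt{\log n}$ and $c(\log n)$. I would thus apply the appropriate functional limit theorem to $v\mapsto\nu(v\log n)$ on $v\in[0,1]$: in (A1) the Donsker-type functional CLT for renewal processes gives convergence to Brownian motion with the $\mu$-scaling shown; in (A2) the same holds with the slowly varying normalisation $c$ fixed by $c(x)^{-2}x\ell(c(x))\to1$, as $|\log W|$ then lies in the domain of attraction of the normal law; and in (A3), where by \eqref{normalization_rv_sec_moment} the increment has stable tail of index $\alpha\in(1,2)$, the partial-sum process converges to a spectrally positive $\alpha$-stable Lévy process and its counting inverse to the spectrally negative $\alpha$-stable process with characteristic function \eqref{cf_st}. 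Passing from $(\nu(v\log n))_v$ to $(X_n(s))_s$ amounts to the time-reversal map sending a path $f$ to $s\mapsto f(1)-f(1-s)$; since $B(1)-B(1-\cdot)$ is again a standard Brownian motion this produces the limit $B(s)$ in (A1),(A2), and that map is $J_1$-continuous on continuous limits, which is why $J_1$-convergence is retained there.

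The main obstacle is two-fold. First, the coupling between the coalescent and $(S_j)$ must be controlled uniformly in $s\in[0,1]$ and simultaneously with the boundary correction near $s=0$, so that the replacement of $X_n(s)$ by the perturbed renewal count is valid as a statement in the Skorokhod space and not merely for each fixed $s$; this is where \eqref{eq:m_minus_2_log_finite} and the regular-variation estimates of \cite{BGT} enter, both to bound the perturbation and to establish tightness. Second, in the heavy-tailed case (A3) the time-reversal/inversion does not preserve $J_1$: a single large increment of $(S_j)$ creates a flat stretch of $\nu$ which, after reversal, is approximated by an accumulation of small increments rather than by a matching single jump, so only $M_1$ convergence survives. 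Verifying $M_1$-continuity of the reversal map on the relevant monotone paths, together with $M_1$-tightness, is the delicate step distinguishing (A3) from (A1) and (A2).
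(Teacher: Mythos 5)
Your proposal is correct and follows essentially the same route as the paper: the dust-component coupling identifies $X_n(s)$, up to error terms, with the block counts of the regenerative composition driven by the perturbed random walk $S_{j-1}+|\log(1-W_j)|$, and the three regimes are then read off from functional limit theorems for that counting process (which the paper imports from the cited work of Alsmeyer, Iksanov and Marynych on the Bernoulli sieve rather than re-deriving via renewal FCLTs and time reversal). The two difficulties you flag --- uniform-in-$s$ control of the coupling error and the $M_1$ degradation in the stable case --- are exactly where the paper concentrates its own effort, namely the estimate \eqref{eq:x_n_s_basic_estimate3} established via Lemma \ref{lemma:approx_gen_bound} and the increment bound for $\rho$.
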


\vskip0.3cm
\noindent
Without moment condition \eqref{eq:m_minus_2_log_finite} the conclusions of Theorem \ref{main1}
are still valid in the weaker sense of the convergence of the finite-dimensional distributions of $(X_{n}(s))_{s\in[0,1]}$, see \cite{Alsmeyer+Iksanov+Marynych:2017} (Remark 2.3).

\vskip0.3cm

\begin{thm}\label{main2}
Assume ${\tt m}_{-2}<\infty$.
If relation \eqref{normalization_rv_sec_moment} holds with $\alpha\in (0,1)$, then as $n\to\infty$
\begin{equation}\label{clt_{i}nf_exp}
\bigg(\frac{\ell(\log n)X_{n}(s)}{(\log n)^{\alpha}}\bigg)_{s\in[0,1]}\ \overset{J_{1}}{\Longrightarrow}\ \big(W^{\leftarrow}_{\alpha}(1)-W^{\leftarrow}_{\alpha}(1-s)\big)_{s\in[0,1]},
\end{equation}
where $W^{\leftarrow}_{\alpha}(s):=\inf\{y\ge 0:W_{\alpha}(y)>s\}$ for $s\ge 0$ and $(W_{\alpha}(y))_{y\ge 0}$ is an $\alpha$-stable subordinator (nondecreasing L\'{e}vy
process) with the Laplace exponent $-\log \me (-zW_{\alpha}(1))=\Gamma(1-\alpha)z^{\alpha}$, $z\ge 0$.
\end{thm}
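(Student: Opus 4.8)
The plan is to reduce the cumulative count $X_n(s)$ to a first-passage functional of an increasing random walk and then to transfer a known functional limit theorem for such functionals. In logarithmic scale the successive multiplicative decrements of the block-counting process satisfy $\log N_n\approx \log n-S_j$ after $j$ collisions, where $S_j=\eta_1+\cdots+\eta_j$ and the $\eta_i$ are, in the large-$n$ limit, i.i.d.\ copies of $|\log W|$ with $W$ distributed as in \eqref{eq:W_distribution}. I would first make this precise through the coupling of $N_n$ with the random walk $(S_j)$ developed in the earlier work on the total number of collisions (\cite{Iksanov+Marynych+Moehle:2009, Gnedin+Iksanov+Marynych:2011}), so that the total number of collisions is close to the first-passage time $\nu(t):=\#\{j\ge 1:S_{j-1}\le t\}$ evaluated at $t=\log n$, with an error negligible on the scale $(\log n)^{\alpha}/\ell(\log n)$.

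The next step is to rewrite $X_n(s)$ in these terms. A collision occurring when $N_n=m$ and merging a proportion $1-W$ of the blocks has block-size $\approx (1-W)m$, hence it is counted in $X_n(s)$ exactly when $(1-W)m\le n^{s}$, i.e.\ when $S_{j-1}+\zeta_j\ge(1-s)\log n$ with $\zeta_j:=|\log(1-W_j)|$. Splitting according to whether $S_{j-1}\ge (1-s)\log n$ or not gives
$$
X_n(s)=\big(\nu(\log n)-\nu((1-s)\log n)\big)+R_n(s),
$$
where the main term counts collisions happening once fewer than $n^{s}$ blocks remain (all of which are automatically small), and $R_n(s)$ counts the ``small-proportion'' collisions happening while more than $n^{s}$ blocks remain. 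The main term is a functional of $(S_j)$ alone, and the functional limit theorem for first-passage processes of random walks with regularly varying steps of index $\alpha\in(0,1)$ gives
$$
\frac{\ell(\log n)}{(\log n)^{\alpha}}\,\nu(r\log n)\ \overset{J_{1}}{\Longrightarrow}\ W^{\leftarrow}_{\alpha}(r),\qquad r\in[0,1],
$$
an inverse $\alpha$-stable subordinator, which is continuous and self-similar of index $\alpha$; see \cite{Alsmeyer+Iksanov+Marynych:2017} (Remark 2.3) and the references therein. Applying the continuous-mapping theorem to the map $(y(r))_{r}\mapsto (y(1)-y(1-s))_{s}$ yields the asserted limit for the main term.

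It then remains to show that $R_n(s)$ is uniformly negligible after normalisation. Using independence of $\zeta$ and $S_{j-1}$ together with the renewal function $U(t)=\me\,\nu(t)$, the expectation of $R_n(s)$ equals $\me\big[U((1-s)\log n)-U\big(((1-s)\log n-\zeta)^{+}\big)\big]$; since $U$ is regularly varying of index $\alpha<1$ (hence concave and sublinear) and $\zeta<\infty$ almost surely with truncated mean $\me[\zeta\1_{\{\zeta\le t\}}]=o(t)$, this is $o((\log n)^{\alpha}/\ell(\log n))$, and a maximal/monotonicity argument upgrades it to negligibility uniform in $s$. Finally, the feature that allows one to reach the strong $J_1$-mode with no moment assumption on $\zeta$ is the monotonicity of $s\mapsto X_n(s)$: a sequence of nondecreasing processes converging in finite-dimensional distributions to a continuous (nondecreasing) limit converges in $D[0,1]$ with the $J_1$-topology. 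Thus the finite-dimensional convergence obtained above promotes automatically to the functional convergence \eqref{clt_{i}nf_exp}. (This is precisely what fails after the centering used in Theorem \ref{main1}, where monotonicity is lost and only $M_1$ survives.)

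I expect the coupling of the state-dependent chain $N_n$ with the genuine random walk $(S_j)$, performed with errors controlled uniformly in $s$ and sharply enough on the slowly varying scale, to be the main obstacle; by contrast the estimate on $R_n(s)$ and the monotone finite-dimensional-to-$J_1$ upgrade are comparatively routine.
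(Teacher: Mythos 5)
Your high-level strategy --- represent the bulk count as an increment $\nu(\log n)-\nu((1-s)\log n)$ of the first-passage process of a walk with steps distributed as $|\log W|$, invoke the functional limit theorem for the inverse $\alpha$-stable subordinator, and upgrade finite-dimensional convergence to $J_1$ using monotonicity of $s\mapsto X_n(s)$ and continuity of $W_\alpha^{\leftarrow}$ --- identifies the correct limit, and the monotone fdd-to-$J_1$ upgrade is a sound observation. The genuine gap is in the reduction to the i.i.d.\ walk. The decrements of the block-counting chain $N_n$ are governed by the state-dependent laws $p_{m,\cdot}$ of \eqref{p-nk}, which converge to the law of $1-W$ only as $m\to\infty$ and with no rate under the sole hypotheses ${\tt m}_{-2}<\infty$ and \eqref{normalization_rv_sec_moment}; a direct coupling of $\log N_n$ with $\log n-S_j$ ``with an error negligible on the scale $(\log n)^{\alpha}/\ell(\log n)$, uniformly in $s$'' is precisely what is not available, and the couplings you cite from \cite{Gnedin+Iksanov+Marynych:2011, Iksanov+Marynych+Moehle:2009} do not deliver it in that form. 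What the paper actually does is pass to the extended coalescent: the decomposition \eqref{eq:5_3_GIM} replaces $X_n(s)$ by the cumulative counts $K_n(s)$ of the regenerative composition generated by the dust component, for which the stick-breaking representation $P_k=W_1\cdots W_{k-1}(1-W_k)$ with genuinely i.i.d.\ $W_i$ is \emph{exact}, and the total discrepancy is bounded once and for all by \eqref{eq:x_n_s_basic_estimate} --- a boundary oscillation term plus $D_n+K_{n,1}$, which are tight. In particular, the number of collisions of $\Pi_n$ and the number of steps of the walk differ by the number $D_n$ of collisions involving at most one primary block, an object your decomposition never isolates or controls.

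A secondary but real issue is the uniformity in $s$ of your error bound. Your $R_n(s)$ counts the indices $j$ with $S_{j-1}<(1-s)\log n\le S_{j-1}+\zeta_j$, which is not monotone in $s$, so no ``maximal/monotonicity argument'' converts the pointwise estimate $\me R_n(s)=o((\log n)^{\alpha}/\ell(\log n))$ into a bound on $\sup_{s\in[0,1]}R_n(s)$; controlling exactly this supremum (the maximal number of intervals $[S_{j-1},S_{j-1}+\zeta_j)$ straddling a moving level) is what the paper's chain of estimates from Lemma \ref{lemma:approx_gen_bound} through \cite{Alsmeyer+Iksanov+Marynych:2017} (Proposition 3.3) is devoted to, and it is not routine. (A minor aside: in Theorem \ref{main1} only case (A3) drops to $M_1$; (A1) and (A2) retain $J_1$ despite the centering.) If you replace your direct coupling by the paper's reduction to $K_n(s)$, the rest of your argument --- the inverse-subordinator limit for the first-passage counts, already available as \cite{Alsmeyer+Iksanov+Marynych:2017} (Theorem 2.5), plus the monotonicity upgrade --- does close the proof.
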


For simple $\Lambda$-coalescents  each partition $\Pi_\infty(t)$  has a dust component.
Therefore, a coupling between the infinite coalescent and a subordinator (which is a compound Poisson process in the case ${\tt m}_{-2}<\infty$)
\cite{Gnedin+Iksanov+Marynych:2011,Gnedin+Iksanov+Moehle:2008} can be applied to relate $N_n$
with a simpler counting process derived from the dust component. We briefly summarise the combinatorial part of this connection.

Consider an {\it extended} coalescent, which is a process on partitions of $[n]$ where
every initial singleton block $\{1\},\dots, \{n\}$ is regarded as {\it primary} and every other block as {\it secondary}.
Whenever the partition  has $m$ blocks (some of which being primary and some secondary), every $k$-tuple of the blocks is merging
in one block at rate $\lambda_{m,k}$ (for $2\leq k\leq m$), and every primary block transforms  into secondary at rate $\lambda_{m,1}$.
The rate $\lambda_{m,1}$ is defined by  formula \eqref{lrates} with $k=1$, and we have $0<\lambda_{m,1}<\infty$
because ${\tt m}_{-1}<\infty$. Let  $N^\ast_n(t)$ be the number of primary blocks at time $t$, and
let $K_{n,k}$ be the number of decrements of size $k\in [n]$ of the process $N^\ast_n:=(N^{\ast}_n(t))_{t\ge 0}$.  With the extended coalescent we may associate a {\it partition of $[n]$ by the first event}, which has two integers $i$ and $j$ in the same block  if
the first event involving $\{i\}$ (collision or transformation into secondary block) is the same as for $\{j\}$; the number of $k$-blocks of this partition is then $K_{n,k}$. With the natural time-ordering of the blocks, the partition by the first event is a regenerative composition as introduced in
\cite{RCS}. Figure \ref{coal1} represents a realisation of an extended coalescent with seven initial blocks.

Let $Z_n$ be the number of secondary blocks emerging throughout the history of the extended coalescent.
Clearly, $Z_n\leq K_{n,1}+X_n$, since a secondary block results from either a collision or a transformation of a primary block,
while the number of transformations into a secondary block does not exceed $K_{n,1}$. Choosing $c_0>\mu^{-1}$ we have
\begin{equation}\label{eq:c_n_estimate}
\mmp\{Z_n>c_0\log n\}\to 0,\quad n\to\infty,
\end{equation}
as follows from \cite{Gnedin+Iksanov+Marynych:2011} (Proposition 5.1 and Theorem 5.1) in conjunction
with \cite{Gnedin+Iksanov+Marynych:2010} (Corollary 1.1).

\begin{figure}
\begin{center}
\includegraphics[scale=0.5]{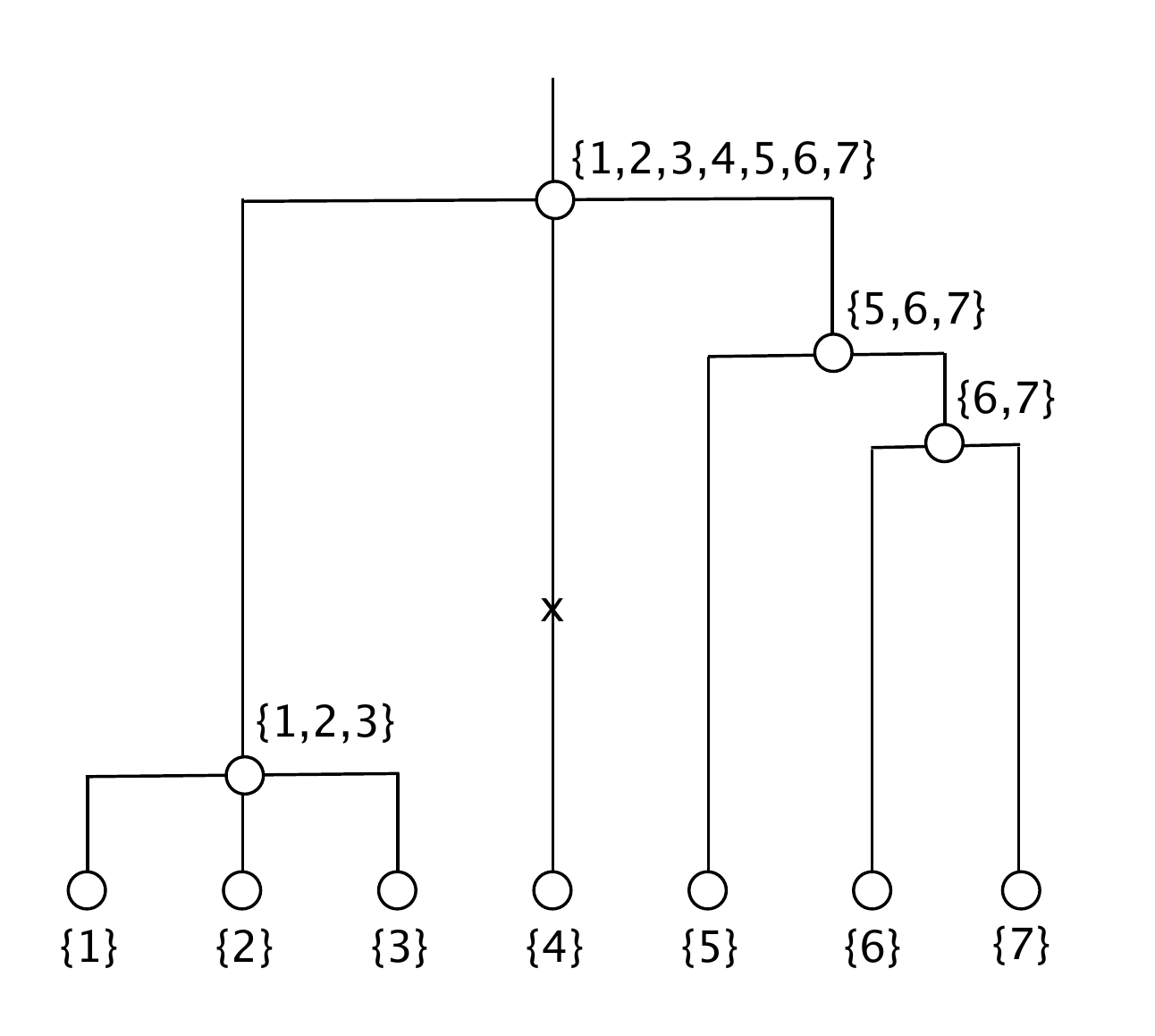}
\caption{A sample path of $\Pi_7$ with four collisions and one
transformation (denoted by ${\tt x}$) of the primary block $4$ into
the secondary block. The process $N_n$ has transitions $7\to 5\to 4\to 3\to 1$, thus $X_7=4$, $X_{7,2}=X_{7,3}=2$, the process $N^\ast_n$ has transitions $7\to 4\to 3\to 1\to 0$, thus $K_{7,1}=2,K_{7,2}=1,K_{7,3}=1$.}
\label{coal1}
\end{center}
\end{figure}

Similarly to
\cite{Gnedin+Iksanov+Marynych:2011} (formula (5.3)), the number of
collisions involving at most $\lfloor n^s\rfloor$ blocks can be decomposed as
\begin{equation}\label{eq:5_3_GIM}
X_n(s) \ = \ X_n^{\ast}(s) + D_{n}(s)\quad
\end{equation}
where, for $s\in[0,1]$, $X_n^{\ast}(s)$ is the number of collisions involving between 2 and $\lfloor n^s\rfloor$ blocks
such that at least two of them are primary, and $D_{n}(s)$ is the number of collisions involving at most $\lfloor n^s\rfloor$ blocks with at most one block being primary. Trivially, $D_{n}(s)\leq D_n$ where $D_n:=D_n(1)$. Furthermore,
$$
\sum_{k=2}^{\lfloor n^s\rfloor -Z_n}K_{n,k}\ \leq \ X_n^{\ast}(s)\ \leq \ \sum_{k=2}^{\lfloor n^s\rfloor }K_{n,k}\,,
$$
where the upper bound is obvious, and the lower bound follows since a decrement of $N^\ast_n$
of size at least two and at most $\lfloor n^s\rfloor -Z_n$ occurs by  a collision of size at most $\lfloor n^s\rfloor $ involving at least two primary particles.

Combining the aforementioned estimates and denoting $K_n(s):=\sum_{k=1}^{\lfloor n^s\rfloor }K_{n,k}$ for $s\in[0,1]$ we arrive at
\begin{equation}\label{eq:x_n_s_basic_estimate}
\sup_{s\in[0,1]}|X_n(s)-K_n(s)|\leq \sup_{s\in[0,1]}\left(\sum_{k=\lfloor n^s-Z_n\rfloor \vee 1}^{\lfloor n^s\rfloor }K_{n,k}\right)+D_n+K_{n,1}.
\end{equation}
The analogues of Theorems \ref{main1} and \ref{main2} have been proved for  $K_n(s)$, see  \cite{Alsmeyer+Iksanov+Marynych:2017} (Theorems 2.2 and 2.5). Hence, it is sufficient to show that for every fixed $\varepsilon>0$
\begin{equation}\label{eq:x_n_s_basic_estimate2}
\frac{\sup_{s\in[0,1]}\left(\sum_{k=\lfloor (n^s-Z_n)\vee 1\rfloor}^{\lfloor n^s \rfloor}K_{n,k}\right)+D_n+K_{n,1}}{(\log n)^{\varepsilon}}\overset{\mmp}{\to} 0,\quad n\to\infty.
\end{equation}
According to \cite{Gnedin+Iksanov+Marynych:2011}
the sequence $(D_n+K_{n,1})_{n\in\mn}$ is tight  (see
Lemma 5.1, Proposition 5.1 and also the proof of Theorem 5.1 therein). Appealing to
\eqref{eq:c_n_estimate}, we see that \eqref{eq:x_n_s_basic_estimate2} will follow from
\begin{equation}\label{eq:x_n_s_basic_estimate3}
\frac{\sup_{s\in[0,1]}\left(\sum_{k=\lfloor (n^s-c_0\log n)\vee 1\rfloor+1}^{\lfloor n^s\rfloor}K_{n,k}\right)}{(\log n)^{\varepsilon}}\overset{\mmp}{\to} 0,\quad n\to\infty.
\end{equation}

It remains to prove  \eqref{eq:x_n_s_basic_estimate3}. Let $(W_k)_{k\in\mn}$ be a sequence of independent copies of the random variable $W$ with distribution \eqref{eq:W_distribution},
$$
P_k \ := \ W_1\cdots W_{k-1}(1-W_k),\quad k\in\mn\quad\text{and}\quad \rho (x) \ := \ \#\{k\in\mn: P_k\geq 1/x\},\quad x\geq 0.
$$
We will use the following lemma borrowed from \cite{Alsmeyer+Iksanov+Marynych:2017} (formula (35) and Lemma 6.1).
\begin{lemma}\label{lemma:approx_gen_bound}
We have for every fixed $\varepsilon>0$
$$
\frac{ \sup_{s\in [0,1]}|K_{n}(s)-(\rho(n)-\rho(n^{(1-s)}))|}{(\log n)^{\varepsilon}} \overset{\mmp}{\to} 0,\quad n\to\infty.
$$
\end{lemma}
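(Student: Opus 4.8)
The plan is to recast the statement in terms of the perturbed random walk underlying the regenerative composition and then compare the genuine composition with an idealised balls-in-boxes scheme. Put $S_0:=0$, $S_k:=\sum_{i=1}^k|\log W_i|$ and $\zeta_k:=|\log(1-W_k)|$, so that $-\log P_k=S_{k-1}+\zeta_k=:T_k$ and $(T_k)_{k\ge1}$ is a perturbed random walk with renewal counting function $\rho(x)=\#\{k\ge1:T_k\le\log x\}$. Then $\rho(n)-\rho(n^{1-s})=\#\{k:(1-s)\log n<T_k\le\log n\}=\#\{k:1/n\le P_k<n^{s-1}\}$ is the number of sticks of length in $[1/n,n^{s-1})$; this is the ``frozen'' count to which I want to compare $K_n(s)$, the number of blocks of size at most $\lfloor n^s\rfloor$ in the partition of $[n]$ by the first event.

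First I would pass from the regenerative composition to the associated occupancy scheme, in which $n$ points are allocated to boxes $1,2,\dots$ with probabilities $P_1,P_2,\dots$ and $B_{n,k}$ (of ${\rm Binomial}(n,P_k)$ law, mean $nP_k$) counts the points in box $k$. The two models differ only through the boundary effect of the last occupied stick; formula (35) of \cite{Alsmeyer+Iksanov+Marynych:2017} makes this precise, the discrepancy being controlled by boundary and singleton blocks, whose number is tight by the estimates recalled above, hence negligible on the scale $(\log n)^{\varepsilon}$. It therefore suffices to bound, uniformly in $s\in[0,1]$, the occupancy error $\sum_k\big(\1_{\{1\le B_{n,k}\le n^s\}}-\1_{\{1/n\le P_k<n^{s-1}\}}\big)$.

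I would split this sum box by box, writing
\begin{equation*}
\1_{\{1\le B_{n,k}\le n^s\}}-\1_{\{1/n\le P_k<n^{s-1}\}}
=\1_{\{B_{n,k}\ge1\}}\big(\1_{\{B_{n,k}\le n^s\}}-\1_{\{nP_k\le n^s\}}\big)
+\big(\1_{\{B_{n,k}\ge1\}}-\1_{\{P_k\ge1/n\}}\big)\1_{\{P_k<n^{s-1}\}}.
\end{equation*}
Summed over $k$, the second group is dominated, uniformly in $s$, by $\sum_k|\1_{\{B_{n,k}\ge1\}}-\1_{\{P_k\ge1/n\}}|$, the number of large-frequency empty boxes plus small-frequency occupied boxes. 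Since $\mmp\{B_{n,k}=0\}\le e^{-nP_k}$ and $\mmp\{B_{n,k}\ge1\}\le nP_k$, its mean is at most $\sum_{P_k\ge1/n}e^{-nP_k}+\sum_{P_k<1/n}nP_k$, a sum fed only by the epochs $T_k$ lying within an $O(1)$ window of $\log n$; by the renewal theorem for $(T_k)$ this is $O_{\mmp}(1)$, far below $(\log n)^{\varepsilon}$.

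The first group, $\sum_k\1_{\{B_{n,k}\ge1\}}\big(\1_{\{B_{n,k}\le n^s\}}-\1_{\{nP_k\le n^s\}}\big)$, is the crux. As $s$ increases the moving threshold $n^s$ sweeps all scales, and box $k$ contributes $\pm1$ exactly while $n^s$ lies between $B_{n,k}$ and its mean $nP_k$; hence the supremum over $s$ of the absolute value is at most $\sup_{\tau>0}\#\{k:\tau\ \text{lies between}\ B_{n,k}\ \text{and}\ nP_k\}$. By binomial concentration a box can straddle $\tau$ only when $nP_k$ is within a bounded multiplicative factor of $\tau$ (and, for large $\tau$, only within the much narrower additive window $|nP_k-\tau|\lesssim\sqrt{\tau}$), so the straddling boxes have $T_k$ in a window of width $O(1)$ about $\log n-\log\tau$. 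Consequently the quantity is controlled by $\sup_{0\le t\le\log n}\#\{k:t<T_k\le t+C\}$ for a fixed $C$. The main obstacle is precisely this uniform maximal-local-count estimate: one must show it is $o_{\mmp}((\log n)^{\varepsilon})$. This is where the regularity of $|\log W|$ enters. Because $\mu=\me|\log W|>0$, the number of epochs of $(S_k)$ in any unit interval has exponentially decaying tails uniformly in the location of the interval (a Chernoff bound on $\mmp\{S_m\le C\}$), the perturbation by $\zeta_k$ is absorbed by the same reasoning applied to $S_{k-1}$, and the maximum of the $O(\log n)$ resulting local counts is $O_{\mmp}(\log\log n)=o_{\mmp}((\log n)^{\varepsilon})$. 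Assembling the three bounds yields $\sup_{s\in[0,1]}|K_n(s)-(\rho(n)-\rho(n^{1-s}))|=o_{\mmp}((\log n)^{\varepsilon})$, which is exactly the combination of formula (35) and Lemma 6.1 of \cite{Alsmeyer+Iksanov+Marynych:2017}.
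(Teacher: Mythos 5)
The paper offers no proof of this lemma: it is quoted verbatim from \cite{Alsmeyer+Iksanov+Marynych:2017} (formula (35) and Lemma 6.1), so there is no in-paper argument to compare against. Your reconstruction follows the same route as that source: identify the partition of $[n]$ by the first event with the Bernoulli-sieve occupancy scheme with frequencies $P_k$, compare the occupancy counts with the ``frozen'' counts $\rho(n)-\rho(n^{1-s})$, dispose of the bottom boundary ($B_{n,k}\ge 1$ versus $P_k\ge 1/n$) by a renewal-measure computation, and reduce the sweep of the threshold $n^s$ to a uniform local count for the perturbed random walk $T_k=S_{k-1}+\zeta_k$. Up to that point your argument is sound: the ``far'' straddling boxes are indeed $O_{\mmp}(1)$ uniformly in the threshold, since for instance $\me\,\#\{k:nP_k\ge 2,\,B_{n,k}\le nP_k/2\}$ and $\me\,\#\{k:B_{n,k}\ge\max(1,2nP_k)\}$ are both bounded via $\sup_a\sum_k\mmp\{T_k\in[a,a+1]\}<\infty$.

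The gap sits exactly where you yourself place the crux: the claim $\sup_{0\le t\le\log n}\#\{k:T_k\in[t,t+C]\}=o_{\mmp}((\log n)^{\varepsilon})$. This is Proposition 3.3 of \cite{Alsmeyer+Iksanov+Marynych:2017} (which the paper invokes separately at the end of Section \ref{Sect3}), and your one-line justification --- ``the perturbation by $\zeta_k$ is absorbed by the same reasoning applied to $S_{k-1}$'' --- does not go through. The perturbations $\zeta_k=|\log(1-W_k)|$ are unbounded and, under the standing hypotheses (only ${\tt m}_{-2}<\infty$ in Theorem \ref{main2}; $\me|\log(1-W)|^{a}<\infty$ for some possibly small $a>0$ in Theorem \ref{main1}), need not even be integrable, so epochs $S_{k-1}$ lying arbitrarily far to the left of the window can be thrown into $[t,t+C]$; the Chernoff bound on $\mmp\{S_m\le C\}$, which controls the unperturbed local count, says nothing about these. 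Moreover, conditioning on the path $(S_j)_{j\ge 0}$ does not make the indicators $\ind{\zeta_k\in[t-S_{k-1},\,t+C-S_{k-1}]}$ independent, because $\zeta_k$ and the increment $|\log W_k|$ are both functions of $W_k$. What is actually required is a uniform-in-$t$ control of the tails (e.g.\ of all moments) of the local count of the perturbed walk, and that is a genuine, separate argument; your asserted rate $O_{\mmp}(\log\log n)$ is stronger than needed and is not substantiated.
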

\vskip0.3cm
Set
$$
w_n(s)\ := \frac{\log^{+}(n^s-c_0\log n)}{\log n}\quad\text{and}\quad s_0(n):=\frac{\log (1+c_0\log n)}{\log n},\quad s\in [0,1]
$$
and note that
\begin{align}
\sup_{s\in[0,1]}\left|(1-s)\log n - (1-w_n(s))\log n\right|& = \sup_{s\in[0,1]}\left|s\log n-\log^{+}(n^s-c_0\log n)\right|\nonumber\\
&=\max\left\{s_0(n)\log n,\sup_{s\in[s_0(n),1]}\left|\log\left(1-\frac{c_0\log n}{n^s}\right)\right|\right\}\nonumber\\
&=\log(1+c_0\log n)\label{eq:loglog_estimate}.
\end{align}
Furthermore, \eqref{eq:x_n_s_basic_estimate3} is equivalent to
\begin{equation}\label{eq:x_n_s_basic_estimate4}
\frac{\sup_{s\in[0,1]}\left(K_n(s)-K_n(w_n(s))\right)}{(\log n)^{\varepsilon}}\overset{\mmp}{\to} 0,\quad n\to\infty,
\end{equation}
and applying Lemma \ref{lemma:approx_gen_bound} we see that \eqref{eq:x_n_s_basic_estimate4} follows if
one can show that
\begin{equation*}
\frac{\sup_{s\in[0,1]}\left|\rho(n^{1-w_n(s)})-\rho(n^{1-s})\right|}{(\log n)^{\varepsilon}}\overset{\mmp}{\to} 0,\quad n\to\infty.
\end{equation*}
By \eqref{eq:loglog_estimate} the left-hand side of the last formula satisfies
\begin{equation*}
\frac{\sup_{s\in[0,1]}\left|\rho(e^{(1-w_n(s))\log n})-\rho(e^{(1-s)\log n}))\right|}{(\log n)^{\varepsilon}}\leq \frac{\sup_{s\in[0,1]}(\rho(e^{s\log n +\log (1+c_0\log n)})-\rho(e^{s\log n}))}{(\log n)^{\varepsilon}}.
\end{equation*}
Letting $b(t):=\log(1+c_0t)$  the right-hand side of the latter inequality is bounded from above by
\begin{multline*}
\frac{\sum_{k=0}^{\lfloor b(\log n)\rfloor}\sup_{s\in[0,1]}(\rho(e^{s\log n +k+1})-\rho(e^{s\log n+k}))}{(\log n)^{\varepsilon}}\\
\leq ([b(\log n)]+1)\frac{\sup_{s\in[0,1]}(\rho(e^{(c+1)s\log n+1})-\rho(e^{(c+1)s\log n}))}{(\log n)^{\varepsilon}},
\end{multline*}
where the last estimate follows from  $b(t)\leq c_0t$ and
$$
\sup_{s\in[0,1]}(\rho(e^{s\log n +k+1})-\rho(e^{s\log n+k}))\leq \sup_{s\in[0,1]}(\rho(e^{(c_0+1)s\log n+1})-\rho(e^{(c_0+1)s\log n}))
$$
for $0\leq k\leq  b(\log n)$.
Finally, according to \cite{Alsmeyer+Iksanov+Marynych:2017} (Proposition 3.3)
$$
\frac{\sup_{s\in[0,1]}(\rho(e^{(c_0+1)s\log n+1})-\rho(e^{(c_0+1)s\log n}))}{((c\log n)^{\varepsilon/2}}\overset{\mmp}{\to} 0,\quad n\to\infty,
$$
whence \eqref{eq:x_n_s_basic_estimate3}.

\section{Other coalescents with dust}\label{Sect4}
We turn to the coalescents with ${\tt m}_{-2}=\infty$, yet ${\tt m}_{-1}<\infty$. In this case the collision times of $\Pi_\infty$  can be identified with the jump times of a subordinator $(S_t)_{t\geq 0}$  with the Laplace exponent
$$\Phi(z)= \log {\mathbb E}\, e^{-z S_1}=\int_{[0,\,1]}(1-(1-x)^z)x^{-2}\Lambda({\rm d}x).$$
The dust component has frequency $\exp(-S_t)$, that is for large $n$ the partition $\Pi_n(t)$ has about $n\,\exp(-S_t)$ primary singleton blocks.

We wish to approximate $X_{n,k}$ by $K_{n,k}$, the number of collisions which involve $k$ primary blocks (and possibly some secondary).
Let $D_{n,k}:= X_{n,k}-K_{n,k}$.  Our main tool is the following estimate.
\begin{lemma}\label{couplinglemma}
For $k=2,3,\ldots$
\begin{equation}\label{n20}
{\mathbb E}|D_{n,k}|\leq c_k\sum_{j=1}^n \left( \frac{\Phi(j)}{j}\right)^2,
\end{equation}
where $c_k$ is a positive constant.
\end{lemma}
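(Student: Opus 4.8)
The plan is to read off $D_{n,k}$ combinatorially from the extended coalescent and then run a first‑moment estimate in which the characteristic factor $\Phi(j)/j$ appears \emph{twice}. First I would classify each collision by the pair $(j,i)$ recording how many of the merging blocks are primary and how many are secondary. A collision of type $(j,i)$ contributes a unit to $X_{n,i+j}$ (its total size is $i+j$) and, when $j\geq 2$, a unit to $K_{n,j}$ (it decrements the primary‑count process $N^\ast_n$ by $j$). Hence a pure collision with $i=0$, $j=k$ cancels in $D_{n,k}=X_{n,k}-K_{n,k}$, and only the \emph{mixed} collisions ($i\geq 1$) survive:
\begin{align*}
|D_{n,k}|\ \leq\ R_{n,k}\ :=\ &\#\{\text{size-}k\text{ collisions involving}\geq 1\text{ secondary block}\}\\
&+\#\{\text{collisions with exactly }k\text{ primary and}\geq 1\text{ secondary blocks}\}.
\end{align*}
Every collision counted by $R_{n,k}$ consumes at least one secondary block, and each secondary block is consumed by exactly one collision during its lifetime; this is what lets me pass from counting collisions to summing over secondary blocks.

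Next I would bound $\me R_{n,k}\leq c_k\sum_{B}\mmp\{B\text{ is consumed in a collision counted by }R_{n,k}\}$, the sum over all secondary blocks $B$ ever created. The probability that the single collision consuming $B$ has the required structure is read off from the rates: when the total number of blocks is $m$, a fixed block enters a size-$k$ collision at rate $\binom{m-1}{k-1}\lambda_{m,k}$ and enters some collision at total rate $\int_{[0,1]}x^{-1}(1-(1-x)^{m-1})\Lambda({\rm d}x)\to{\tt m}_{-1}$. The elementary pointwise inequality $\binom{m}{k-1}x^{k}(1-x)^{m-k}\leq c_k\,m^{-1}(1-(1-x)^m)$ on $(0,1)$ gives $\binom{m}{k-1}\lambda_{m,k}\leq c_k\Phi(m)/m$, and an analogous bound handles the ``exactly $k$ primary'' collisions (whose leading case merges $k$ primary blocks with the single secondary $B$). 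So, conditional on $B$ being consumed when the block‑count is $m$, the probability this collision is counted by $R_{n,k}$ is at most a constant multiple of $\Phi(m)/m$. This is the first factor $\Phi(j)/j$.

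The second factor comes from counting secondary blocks, where I would invoke the dust/subordinator coupling recalled before the lemma. In real time $N^\ast_n$ decreases on an exponential scale governed by the subordinator with Laplace exponent $\Phi$; the creation rate of secondary blocks (from primary–primary collisions and from transformations) is of order $\Phi(m)$; and every secondary block collides at rate tending to ${\tt m}_{-1}=O(1)$, hence lives only $O(1)$ units of real time, during which the primary count changes by at most a bounded factor. Two consequences are that a secondary block is consumed at a primary‑count level of the same dyadic order $j\asymp 2^i$ at which it was born, and that the number of secondary blocks born while the primary count is of order $j$ is of order $\Phi(j)$. Summing the per‑block estimate over dyadic windows then yields
$$
\me R_{n,k}\ \leq\ c_k\sum_{i}\Phi(2^i)\,\frac{\Phi(2^i)}{2^i}\ \leq\ c_k'\sum_{j=1}^{n}\Big(\frac{\Phi(j)}{j}\Big)^2,
$$
which is the claim (the dyadic sum and the full sum are of the same order for regularly varying $\Phi$).

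The step I expect to be the main obstacle is the third one: making the coupling quantitative enough to control, uniformly in $n$, both the number of secondary blocks present and the primary‑count scale over a block's lifetime. In particular the large decrements of $N^\ast_n$ (a single collision may erase a positive fraction of the primary blocks) must be handled so that ``birth level'' and ``consumption level'' of a secondary block genuinely remain of the same order, which is precisely what turns the two factors $\Phi(j)/j$ into the stated sum. The rate inequalities of the second paragraph are routine; the probabilistic localization of secondary blocks on the $\Phi$‑scale is the delicate part, and it is there that the cited coupling results carry the argument.
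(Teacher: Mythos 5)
Your reduction of $D_{n,k}$ to the mixed collisions $R_{n,k}$ and the rate inequality $\binom{m-1}{k-1}\lambda_{m,k}\le c_k\,\Phi(m)/m$ are both correct, and your overall strategy --- a direct pathwise count over the extended coalescent, with one factor $\Phi(j)/j$ coming from the per-block probability that the consuming collision is of the bad type and the second from the number of secondary blocks per level --- is genuinely different from the paper's. The paper instead passes to the tail counts $X_{n,k+}$ and $K_{n,k+}$ (which are pathwise ordered, so the expected difference is a difference of expectations), writes distributional recursions at the first collision, repairs the resulting index shift by coupling $\Pi_n$ with its restriction to $[n-1]$ through the tagged singleton $\{n\}$, and bounds the inhomogeneous term $y_{n,k}=n^{-1}\sum_{r\le k} r\,\me K_{n,r}\le c\,\Phi(n)/n$; the second factor $\Phi(j)/j$ then emerges from solving the recursive inequality exactly as for $X_n-K_n$ in \cite{Gnedin+Iksanov+Marynych:2011}, so no pathwise localization of secondary blocks is ever needed.

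That localization is precisely where your argument has a genuine gap, and the heuristic you offer for it is incorrect as stated. A secondary block does not live ``$O(1)$ units of real time during which the primary count changes by at most a bounded factor'': its lifetime is roughly exponential, and a single collision can remove a positive fraction of all blocks (the subordinator's increment over a unit time interval is unbounded), so the consumption level can be arbitrarily far below the birth level. This matters because $\Phi(m)/m$ is \emph{decreasing}, so your per-block bound deteriorates as the block survives downward; without localization the argument only gives $\me R_{n,k}=O\bigl(\sum_{j\le n}\Phi(j)/j\bigr)$, which for $\Phi(j)\asymp j^{\gamma}\ell(j)$ is of the exact order of $\me X_n$ and hence useless for Theorem \ref{main3}. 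The coupling results you cite do not contain the missing estimate. A repair is available --- a tagged block alive at primary-count level $j_0$ survives down to level $j<j_0$ with probability $\asymp j/j_0$ (it is thinned at each collision exactly like a dust particle), and combining this geometric decay across dyadic levels with the concavity bound $\Phi(j_0)\le (j_0/j)\Phi(j)$ keeps the expected number of secondary blocks consumed at level $\asymp j$ at $O(\Phi(j))$ --- but this computation is the heart of your proof and must be carried out rather than asserted.
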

\begin{proof}
We modify the argument for $X_n$ in
\cite{Gnedin+Iksanov+Marynych:2011} (Section 5.1). Let $X_{n,k+}$ be the number of collisions involving at least $k$ blocks, and let
$K_{n,k+}$ be the number of collisions involving at least $k$ primary blocks.
These variables are easier to compare, because $X_{n,k+}\geq K_{n,k+}$ and
$d_{n,k+}:={\mathbb E}(X_{n,k+}- K_{n,k+})\geq 0$.

Decomposing $X_{n,k+}$ at the first collision event of $\Pi_n$ we get
\begin{equation}\label{n17}
X_{1,k+}=0,~~X_{n,k+}\stackrel{{\rm d}}{=} \widehat{X}_{n-I_n,k+}+ \1_{\{I_n\geq k-1\}},~~~k\geq 2
\end{equation}
with the usual convention about the variables in the right-hand side, and $I_n$ having distribution \eqref{p-nk}. In the same way,
restricting the coalescent to the set of the primary blocks
\begin{equation}\label{n18}
K_{0,k+}=K_{1,k+}=0,~~~K_{n,k+}\stackrel{{\rm d}}{=} \widehat{K}_{n-I_n-1,k+}+ \1_{\{I_n\geq k-1\}},~~~k\geq 2.
\end{equation}
To make the right-hand sides  comparable, we need to adjust for $-1$ in recursion \eqref{n18}.
To that end, we focus on the singleton $\{n\}$ in the evolution of $\Pi_n$,
and identify $\Pi_{n-1}$ with the restriction of $\Pi_n$  to  $[n-1]$.
Thus $X_{n-1,k+}$ is realised as the count of  mergers of at least $k$ blocks for the restricted process.
The first collision involving $\{n\}$ occurs at some random time, say $\tau$, and for $\tau\geq t$ the  partitions  $\Pi_n(t)$ and $\Pi_{n-1}(t)$ have the same number of blocks, that is $N_n(t)=N_{n-1}(t)$ for $\tau\geq t$.
Now, if $\{n\}$ at time $\tau$ is merged together with $k-1$ other blocks
(a $k$-merger), then   $X_{n,k+}=X_{n-1,k+}+1$.
Otherwise $X_{n,k+}=X_{n-1,k+}$.
We see that $X_{n-1,k}$ and $X_{n,k}$  differ only in the case when the first collision taking $\{n\}$ is  a $k$-merger, hence involving at most
$k$ primary blocks. It follows that $X_{n,k+}\leq X_{n-1,k+}+ Y_{n,k}$, where $Y_{n,k}$ is the indicator of the event that the first collision with $\{n\}$ takes at most $k$ primary blocks.

From \eqref{n17} we now conclude that
\begin{equation}\label{n23}
X_{n,k+}\stackrel{{\rm d}}{\leq} X_{n-I_n-1,k+}+Y_{n-I_n,k}+ \1_{\{I_n\geq k-1\}},
\end{equation}
where the inequality is meant in the sense of stochastic order.
With  $y_{n,k}:={\mathbb E}Y_{n,k}={\mathbb P}\{Y_{n,k}=1\}$,
taking expectations in \eqref{n18} and \eqref{n23}, and subtracting the first relation from the second we obtain
$$d_{0,k+}=d_{1,k+}=0,~~d_{n,k+}\leq\sum_{j=1}^{n-1}p_{n,n-j}(d_{j-1,k+}+y_{j,k}),\quad n\geq2.$$ To evaluate $y_{n,k}$ we observe that this is the probability that in the partition of $[n]$ by the first event, the block containing element $n$ has size at most $k$. The expected total number of elements in such blocks is $\sum_{r=1}^{k} r\, {\mathbb E}K_{n,r}$, hence by exchangeability among $n$ primary blocks $$y_{n,k}=\frac{1}{n}\sum_{r=1}^{k} r\,{\mathbb E}K_{n,r}.$$
Following the same line  as in \cite{Gnedin+Iksanov+Marynych:2011} (Section 5.1)
we obtain for some positive constants $c_r, c', c_{k+}$ $${\mathbb E} K_{n,r}\leq c_r \Phi(n),$$
whence $y_{n,k}\leq c' \,\Phi(n)$ and therefore (see \cite{Gnedin+Iksanov+Marynych:2011})
$$d_{n,k+}\leq c_{k+}\sum_{j=1}^n  \left(\frac{\Phi(j)}{j}\right)^2.$$
Using $X_{n,k}=X_{n,k+}-X_{n,(k-1)+}$    and $K_{n,k}=K_{n,k+}-K_{n,(k-1)+}$ we
decompose the difference in question as $$D_{n,k}=(X_{n,k+}-K_{n,k+})- (X_{n,(k-1)+}-K_{n,(k-1)+}).$$
Since the differences in parantheses are both nonnegative, taking expectations and applying the triangle inequality
we obtain
$${\mathbb E}|D_{n,k}|\leq d_{n,k+}+d_{n,(k-1)+}.$$
The desired estimate \eqref{n20} easily follows with constant $c_k=c_{k+}+c_{(k-1)+}$.
\end{proof}

In \cite{Gnedin+Iksanov+Marynych:2011} we used a coupling with a subordinator to derive a limit law for $X_n$ under the condition of regular variation
\begin{equation}\label{eq:lambda_reg_var_dust}
\int_{[x,\,1]}y^{-2}\Lambda({\rm d}y)~ \sim~ x^{-\gamma}\ell(1/x),\quad x\to 0+
\end{equation}
for some $\gamma\in(0,1)$ and a function $\ell$ slowly varying at infinity. Specifically,
\begin{equation}\label{limXn}
\frac{X_n}{n^\gamma \ell(n)} \stackrel{{\rm d}}{\to} \Gamma(2-\gamma) {\cal E}_{\gamma},
\end{equation}
where the random variable
$${\cal E}_\gamma:=\int_0^\infty e^{-\gamma S_t}\,{\rm d}t$$
is known as an exponential functional of a subordinator. The following result is the extension for the collision spectrum.
\begin{thm}\label{main3}
If condition  {\rm ~(\ref{eq:lambda_reg_var_dust})} holds, then
as $n\to\infty$
\begin{equation}\label{eq:conv}
\frac{1}{n^{\gamma}\ell(n)}\left(X_{n,k}\right)_{k\ge 2}~\Longrightarrow~ \left(\frac{\gamma\Gamma(k-\gamma)}{k!}\,\,{\cal E}_\gamma \right)_{k\ge 2}
\end{equation}
weakly in the product space $\mr^\infty$.
Moreover, the convergence of joint moments holds: for all $m\ge 2$ and nonnegative integers  $q_2,\ldots,q_m$, as $n\to\infty$,
\begin{eqnarray}\label{eq:momconv}
\me\prod_{k=2}^m \left(\frac{X_{n,k}}{n^\gamma\ell(n)}\right)^{q_k}
\ \to\
\prod_{k=2}^m\left(\frac{\gamma\Gamma(k-\gamma)}{k!}\right)^{q_k}
\me\left({\cal E}_\gamma \right)^{q_2+\cdots+q_m}
=
\nonumber
\\
\prod_{k=2}^m\left(\frac{\gamma\Gamma(k-\gamma)}{k!}\right)^{q_k}~\prod_{j=1}^{q_2+\dots+q_m} \frac{j}{\Phi(\gamma j)}\,.
\end{eqnarray}
\end{thm}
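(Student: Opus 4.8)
\emph{Plan.} I would pass from the collision counts $X_{n,k}$ to the primary-block decrement counts $K_{n,k}$, analyse the latter through the death chain $N_n^\ast$ and its subordinator coupling, and read off the limit from the moment formula for the exponential functional $\mathcal{E}_\gamma$.

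First dispose of $D_{n,k}=X_{n,k}-K_{n,k}$. Regular variation \eqref{eq:lambda_reg_var_dust} gives, by an Abelian theorem, $\Phi(z)\sim\Gamma(1-\gamma)z^\gamma\ell(z)$, so the summands in \eqref{n20} behave like $\Gamma(1-\gamma)^2 j^{2\gamma-2}\ell(j)^2$ and, since $\gamma\in(0,1)$, the partial sums $\sum_{j=1}^n(\Phi(j)/j)^2$ are regularly varying of index $(2\gamma-1)\vee 0<\gamma$. Hence $\me|D_{n,k}|=o(n^\gamma\ell(n))$ by Lemma \ref{couplinglemma}, so $D_{n,k}/(n^\gamma\ell(n))\tp 0$, and by Slutsky \eqref{eq:conv} reduces to the same statement for $(K_{n,k})_{k\ge 2}$. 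Now $K_{n,k}$ counts the decrements of size $k$ of the primary death chain $N_n^\ast$, which falls from $m$ to $m-k$ at rate $q_{m,k}=\binom{m}{k}\int_{(0,1)}x^{k-2}(1-x)^{m-k}\Lambda(\mathrm{d}x)$, with total rate $\sum_{k\ge 1}q_{m,k}=\Phi(m)$. Substituting $x=u/m$ and using $\binom{m}{k}x^k(1-x)^{m-k}\sim(k!)^{-1}u^k e^{-u}$ together with Karamata's theorem for the regularly varying measure $x^{-2}\Lambda(\mathrm{d}x)$ yields the local asymptotics $q_{m,k}\sim c_k\,m^\gamma\ell(m)$ with $c_k:=\gamma\Gamma(k-\gamma)/k!$; thus $q_{m,k}$ is asymptotically proportional to $\Phi(m)$, with ratio $c_k/\Gamma(1-\gamma)$.

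Next run the subordinator coupling behind \eqref{limXn}. Writing $K_{n,k}=\int_0^\infty q_{N_n^\ast(u),k}\,\mathrm{d}u+M_{n,k}$ with $M_{n,k}$ the compensated counting martingale, the predictable quadratic variation equals $\me K_{n,k}=O(n^\gamma\ell(n))$, so $M_{n,k}$ is $o(n^\gamma\ell(n))$ in $L^2$ and it remains to treat the occupation integral. Since $N_n^\ast(u)$ is close to $ne^{-S_u}$ and $q_{m,k}\sim c_k m^\gamma\ell(m)$, the very argument that proves $X_n/(n^\gamma\ell(n))\dod\Gamma(2-\gamma)\mathcal{E}_\gamma$ gives $(n^\gamma\ell(n))^{-1}\int_0^\infty q_{N_n^\ast(u),k}\,\mathrm{d}u\dod c_k\mathcal{E}_\gamma$. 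As every coordinate is a functional of the single path $N_n^\ast$ (equivalently of $S$), the convergence is automatically joint and all limits are deterministic multiples of the same $\mathcal{E}_\gamma$, which is \eqref{eq:conv}; a reassuring check is $\sum_{k\ge 2}c_k=\Gamma(2-\gamma)$, consistent with \eqref{limXn}.

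Finally, for \eqref{eq:momconv} I would upgrade \eqref{eq:conv} by uniform integrability. From $X_{n,k}\le X_n$ and the moment bounds $\sup_n\me(X_n/(n^\gamma\ell(n)))^p<\infty$ of \cite{Gnedin+Iksanov+Marynych:2011} (or a direct recursion as in the Remark following Theorem \ref{thm:main_thm_stat}) one gets $\sup_n\me(X_{n,k}/(n^\gamma\ell(n)))^p<\infty$, and Hölder's inequality makes each product $\prod_k (X_{n,k}/(n^\gamma\ell(n)))^{q_k}$ uniformly integrable. Together with \eqref{eq:conv} this gives convergence of the joint moments to $\prod_k c_k^{q_k}\,\me\mathcal{E}_\gamma^{q_2+\cdots+q_m}$, and the explicit product in \eqref{eq:momconv} follows from the classical moment recursion $\me\mathcal{E}_\gamma^p=(p/\Phi(\gamma p))\,\me\mathcal{E}_\gamma^{p-1}$ for exponential functionals of subordinators. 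The main obstacle is the occupation-integral convergence of the third step: the local asymptotics of $q_{m,k}$ must be made uniform enough to be integrated along the whole path $N_n^\ast$, including the boundary regime of small $N_n^\ast$ where the regular-variation approximation degrades; securing the uniform moment bounds that feed the uniform integrability is the remaining technical point.
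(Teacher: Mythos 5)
Your overall strategy coincides with the paper's: decompose $X_{n,k}=K_{n,k}+D_{n,k}$, kill $D_{n,k}$ via Lemma \ref{couplinglemma} together with the Tauberian asymptotics $\Phi(z)\sim\Gamma(1-\gamma)z^\gamma\ell(z)$ (your estimate that $\sum_{j\le n}(\Phi(j)/j)^2$ has index $(2\gamma-1)\vee 0<\gamma$ is exactly why $\me|D_{n,k}|=o(n^\gamma\ell(n))$), and then transfer the limit from $K_{n,k}$. The one place you diverge is the middle step: the paper does not re-derive the limit of $K_{n,k}$ at all, but cites \cite{GPY1} (Theorem 4.1), which gives the \emph{almost sure} convergence $K_{n,k}/(n^\gamma\ell(n))\to \gamma\Gamma(k-\gamma)/k!\cdot\mathcal{E}_\gamma$ for regenerative composition structures; joint convergence over $k$ is then automatic. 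Your martingale-plus-occupation-integral sketch for $N_n^\ast$ is a plausible route to the same statement, and your constants check out (including $\sum_{k\ge 2}c_k=\Gamma(2-\gamma)$), but the difficulty you flag at the end --- making the local asymptotics $q_{m,k}\sim c_k m^\gamma\ell(m)$ uniform enough to integrate along the whole path, including the regime where $N_n^\ast$ is small --- is precisely the content of the cited theorem, so as written your argument outsources nothing and correspondingly leaves its hardest step open, whereas the paper outsources exactly that step. For the moments your argument is essentially the paper's: domination by $X_n$, whose normalized moments converge by \cite{Haas+Miermont:2011}, plus H\"older and the Carmona--Petit--Yor recursion $\me\mathcal{E}_\gamma^{\,p}=(p/\Phi(\gamma p))\,\me\mathcal{E}_\gamma^{\,p-1}$. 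In short: correct skeleton, same decomposition and same moment argument; to close the remaining gap you should either invoke \cite{GPY1} (Theorem 4.1) directly or carry out the uniform regular-variation estimates that it encapsulates.
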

\begin{proof}  Convergence \eqref{eq:conv} is concluded from the counterpart result for $(K_{n,k})_{k\geq 2}$
viewed in the context of $\Pi_\infty$. Indeed, by \cite{GPY1} (Theorem 4.1)
$$\frac{K_{n,k}}{n^\gamma\ell(n)}~\to~\frac{\gamma \Gamma(k-\gamma)}{k!} \,\,  {\cal E}_\gamma ~~~{\rm a.s.}$$
On the other hand, changing a variable and integrating by parts followed by an application of Karamata's Tauberian theorem \cite{BGT} (Theorem 1.7.1') give $\Phi(z)\sim \Gamma(1-\gamma) z^\gamma \ell(z)$ as $z\to\infty$.
Using this asymptotics along with \eqref{n20} readily yields ${\mathbb E}D_{n,k}=o(n^\gamma\ell(n))$, whence
$|K_{n,k}-X_{n,k}|/(n^\gamma \ell(n))\stackrel{\mathbb P}{\to}0$.

The assertion about the convergence of moments \eqref{eq:momconv} follows by dominated convergence from the analogous fact for $X_n$
proved in \cite{Haas+Miermont:2011}, and the familiar formula for moments of ${\cal E}_\gamma$ (see, e.g. \cite{GPY1} and references therein).
\end{proof}

Formal summation in \eqref{eq:conv} yields \eqref{limXn}. We see that with the same scaling the variables $X_n$, $X_{n,k}$'s all converge in distribution to multiples of  ${\cal E}_\gamma$.

Theorem \ref{main3} covers beta$(a,b)$-coalescents with $a\in (1,2)$. In this case \eqref{eq:lambda_reg_var_dust} holds
with $\gamma=2-a\in (0,1)$ and constant function $\ell(x)\equiv   1/  (\gamma{\rm B}(a,b))$. The related exponential functional is denoted by ${\cal E}_{2-a}$
(Table 1 in Section \ref{Sect1}). Extension to the case $\gamma=1$ is possible  for some slowly varying factors (for instance $\ell(z)=(\log z)^{-\theta}$ with $\theta>2$). A further extension concerns  the spectrum of  coalescent $(\Pi_n(t), t\in [0,T])$ with finite time horizon $T$; the assertions of Theorem \ref{main3} hold then with ${\cal E}_\gamma$ replaced by the incomplete exponential functional $\int_0^T \exp(-\gamma S_t){\rm d}t$. Another edge case is that of slow variation, where (\ref{eq:lambda_reg_var_dust}) holds with $\gamma=0$ and some unbounded function $\ell$.
In that case the series \eqref{n20} converges which implies by Lemma \ref{couplinglemma} that the sequence $(D_{n,k})_{ n\in{\mathbb N}}$ is tight for every $k$. Moreover, each ${\mathbb E}X_{n,k}$ is then of smaller order of growth than ${\mathbb E}X_{n}$. However, the limit laws for small block counts $K_{n,k}$ are only available for functions $\ell$ of logarithmic growth \cite{GPY2}, although there are plentiful results on $K_n$ \cite{GBslow, GIksslow} which have their counterparts for $X_n$ \cite{Gnedin+Iksanov+Marynych:2011} (Section 5.3). Thus we confine ourselves with the framework of \cite{GPY2}, assuming that
the characteristic measure satisfies
\begin{eqnarray}
\int_{[x,\,1]}y^{-2}\Lambda({\rm d}y)&=& |\log x| +c + O(x^{-\epsilon}),  ~~x\to 0+, \label{log1}\\
\int_{[x,\,1]}y^{-2}\Lambda({\rm d}y)&= & O((1-x)^{\epsilon}),~~x\to 1-
\label{log2}
\end{eqnarray}
for some constants $\epsilon>0$ and $c$.
Introduce the logarithmic moments
$$\nu_r=\int_{[0,\,1]}|\log (1-x)|x^{-2}\Lambda({\rm d}x), ~~r=1,2$$
(so $\nu_1={\mathbb E}S_1$ and $\nu_2={\rm Var}S_1$ for the corresponding subordinator).

Our next result follows from \cite{GPY2} (Theorem 15) and the discussion above.
\begin{thm}\label{gammacase} If  conditions {\rm(\ref{log1})} and  {\rm(\ref{log2})} hold, then
$$
  \left( \frac{X_{n,k}-(k\nu_1)^{-1}\log n}{\sqrt{\log n}}\right)_{k\geq 2}~\Longrightarrow~ \left(\mathcal{N}_k\right)_{k\geq 2}
$$
weakly in $\mr^{\infty}$, where the limit is a zero-mean Gaussian sequence with the covariance matrix
$$
\left(\frac{\nu_2}{\nu_1^3}\frac{1}{ij}+\delta_{i,j}\frac{1}{j\nu_1}\right)_{i,j\geq 2}.
$$
\end{thm}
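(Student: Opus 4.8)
The plan is to deduce the statement from the corresponding multivariate central limit theorem for the primary-block counts $K_{n,k}$, controlling the discrepancy $D_{n,k}=X_{n,k}-K_{n,k}$ through the coupling of Lemma \ref{couplinglemma}. Writing
$$
\frac{X_{n,k}-(k\nu_1)^{-1}\log n}{\sqrt{\log n}}=\frac{K_{n,k}-(k\nu_1)^{-1}\log n}{\sqrt{\log n}}+\frac{D_{n,k}}{\sqrt{\log n}},
$$
the entire argument reduces to showing that the remainder $D_{n,k}/\sqrt{\log n}$ vanishes in probability, after which a Slutsky-type argument transfers the Gaussian limit from $K_{n,k}$ to $X_{n,k}$.

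First I would pin down the growth of the Laplace exponent. Conditions (\ref{log1}) and (\ref{log2}) put us in the slow-variation regime of (\ref{eq:lambda_reg_var_dust}) with $\gamma=0$ and $\ell(z)=\log z$, so the same change-of-variable, integration-by-parts and Karamata Tauberian computation used in the proof of Theorem \ref{main3} gives $\Phi(z)\sim\log z$ as $z\to\infty$. Hence $\Phi(j)/j=O((\log j)/j)$ and the series $\sum_{j\ge1}(\Phi(j)/j)^2$ converges. Feeding this into the coupling bound (\ref{n20}) produces a finite constant $M_k$ with $\sup_{n}\me|D_{n,k}|\le M_k$ for every $k\ge2$; this is precisely the tightness of $(D_{n,k})_{n\in\mn}$ recorded in the discussion preceding the theorem.

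Since $(D_{n,k})_{n}$ is $L^1$-bounded, Markov's inequality yields $\mmp\{|D_{n,k}|>\delta\sqrt{\log n}\}\le M_k/(\delta\sqrt{\log n})\to0$ for every $\delta>0$, i.e.\ $D_{n,k}/\sqrt{\log n}\overset{\mmp}{\to}0$ for each fixed $k$. Consequently, for every fixed $m\ge2$ the vector $(D_{n,k}/\sqrt{\log n})_{2\le k\le m}$ converges to $0$ in probability in $\mr^{m-1}$. By \cite{GPY2} (Theorem 15) the normalised primary-block counts $((K_{n,k}-(k\nu_1)^{-1}\log n)/\sqrt{\log n})_{2\le k\le m}$ converge weakly to the centered Gaussian vector with covariance $\nu_2\nu_1^{-3}(ij)^{-1}+\delta_{i,j}(j\nu_1)^{-1}$. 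The multivariate Slutsky lemma then gives the same weak limit for $((X_{n,k}-(k\nu_1)^{-1}\log n)/\sqrt{\log n})_{2\le k\le m}$. As $m\ge2$ is arbitrary and the Gaussian limits are mutually consistent, convergence of all finite-dimensional distributions---equivalently weak convergence in the product space $\mr^{\infty}$---follows.

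The probabilistic substance is carried by the two external inputs, so the argument is largely a matter of assembly. The point demanding the most care is the first one: verifying that (\ref{log1})--(\ref{log2}) indeed place us in the regime $\Phi(z)\sim\log z$ for which both the coupling series (\ref{n20}) converges and \cite{GPY2} (Theorem 15) applies, and then matching the constants so that the centering $(k\nu_1)^{-1}\log n$, the normalisation $\sqrt{\log n}$, and in particular the covariance $\nu_2\nu_1^{-3}(ij)^{-1}+\delta_{i,j}(j\nu_1)^{-1}$ coming out of the regenerative-composition CLT of \cite{GPY2} agree exactly with the quantities displayed in the statement.
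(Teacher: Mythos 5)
Your argument is correct and is essentially the paper's own proof: the authors likewise obtain the theorem by combining the CLT for the regenerative-composition counts $K_{n,k}$ from \cite{GPY2} (Theorem 15) with the coupling bound of Lemma \ref{couplinglemma}, noting that in this slow-variation regime $\Phi(z)\sim\log z$ makes the series $\sum_j(\Phi(j)/j)^2$ converge, so that $(D_{n,k})_n$ is tight and $D_{n,k}/\sqrt{\log n}\overset{\mmp}{\to}0$. You have merely written out explicitly the assembly that the paper compresses into the sentence ``follows from \cite{GPY2} (Theorem 15) and the discussion above.''
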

Despite the seemingly limited scope, the theorem covers a number of important cases. For instance, $\Lambda({\rm d}x)= \frac{x^2(1-x)^{\theta-1}}{|\log(1-x)|}\1_{(0,1)}(x){\rm d}x$ is the case where $(S_t)_{t\geq 0}$ is the classic gamma-subordinator with parameter $\theta>0$ and the Laplace exponent $\Phi(z)=\log(1+z/\theta)$ (see \cite{Gnedin+Iksanov+Marynych:2011} for constants and the normal limit for $X_n$). Another example is the beta$(2,b)$-coalescent; in that case the logarithmic moments can be evaluated in terms of the Hurwitz zeta function $\zeta(z,b):=\sum_{i\geq 0}(i+b)^{-z}$
as $\nu_1=\zeta(2,b)$ and $\nu_2=2\zeta(3,b)$.

\section{Beta-coalescents without dust component}\label{Sect5}

The $\Lambda$-coalescents with  ${\tt m}_{-1}=\infty$ are very different from the coalescents with dust component, and, as such,
require other approaches. The most general available result on the total number of collisions $X_n$ states a stable limit distribution for the coalescents with characteristic measure satisfying $\Lambda([0,x])\sim c x^a$ for $x\to 0+$ (where $c>0,~a\in (0,1)$)
and a condition on the remainder of the expansion at $0$, see \cite{Gnedin+Yakubovich:2007} (Theorem 7).
This covers, in particular, all beta$(a,b)$-coalescents with $0<a<1$.
We also know that a stable limit for $X_n$ holds for beta$(1,b)$-coalescents \cite{Gnedin+Iksanov+Marynych+Moehle:2014}.

In what follows  we shall confine ourselves to the  family of  beta$(a,b)$-coalescents with $a\in (0,1]$.
The qualitative difference between the beta coalescents with $0<a<1$ and $a=1$ is that in the first case $\Pi_\infty(t)$ has finitely many blocks for all $t>0$ (the coalescent `comes down from infinity') and terminates in finite time, while in the second case the number of blocks always stays infinite.
\begin{thm}\label{main4}
Suppose $\Lambda$ is a {\rm beta}$(a,b)$-distribution with parameters  $a\in(0,1]$ and $b>0$.
Let
$$p_k^{(a)}:=\frac{(2-a)\Gamma(k+a-1)}{\Gamma(a)(k+1)!},~~ k\in\mn,$$
in particular, $p_k^{(1)}=(k^2+k)^{-1}$.
\begin{itemize}
\item[\rm (i)] If $0<a<1$, then as $n\to\infty$
$$
\left(\frac{X_{n,k}-p^{(a)}_{k-1} (1-a)n}{(1-a)n^{1/(2-a)}}\right)_{k\geq 2}~\Longrightarrow~ \left(p^{(a)}_{k-1}
\mathcal{S}_{2-a}\right)_{k\geq 2}
$$
weakly in $\mr^\infty$, where $\mathcal{S}_\alpha$,  for $\alpha\in (1,2)$, is an
$\alpha$-stable random variable with the characteristic function
$$u\mapsto\exp\{|u|^\alpha(\cos(\pi\alpha/2)+{\rm i}\sin(\pi\alpha/2){\rm sgn}(u))\}, ~u\in\mr.$$
\item[\rm(ii)] If $a=1$, then as $n\to\infty$
$$
\left(n^{-1}(\log n)^2X_{n,k}-p_{k-1}^{(1)}(\log n+\log\log n)\right)_{k\geq 2}~\Longrightarrow~ \left(p_{k-1}^{(1)}\mathcal{S}_1\right)_{k\geq 2}
$$
weakly in $\mr^\infty$, where $\mathcal{S}_1$ is a $1$-stable random variable with the characteristic function
$$u\mapsto\exp\Big({\rm i}u\log|u|-\frac{\pi}{2}|u|\Big)=({\rm i}u)^{{\rm i}u},~ u\in\mr.$$
\end{itemize}
\end{thm}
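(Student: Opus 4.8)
The plan is to reduce both parts to the already-known stable limit theorems for the total number of collisions $X_n$, by showing that each $X_{n,k}$ differs from the deterministic multiple $p^{(a)}_{k-1}X_n$ by a quantity that is negligible on the relevant scale. Recall that for beta$(a,b)$-coalescents with $0<a<1$ one has $X_n/n\to 1-a$ and, by \cite{Gnedin+Yakubovich:2007} (Theorem 7),
$$
\frac{X_n-(1-a)n}{(1-a)n^{1/(2-a)}}\ \dod\ \mathcal{S}_{2-a},
$$
while for $a=1$, by \cite{Gnedin+Iksanov+Marynych+Moehle:2014},
$$
n^{-1}(\log n)^2 X_n-(\log n+\log\log n)\ \dod\ \mathcal{S}_1.
$$
Set $b_n:=n^{1/(2-a)}$ in case (i) and $b_n:=n/(\log n)^2$ in case (ii). The core of the proof is the claim that, for every fixed $k\geq 2$,
$$
X_{n,k}\ =\ p^{(a)}_{k-1}X_n+o_{\mmp}(b_n),\qquad n\to\infty.
$$
Granting this, each coordinate equals, after centering and scaling, the \emph{same} random variable $\zeta_n$ (namely the left-hand side of the displayed $X_n$-limit) multiplied by the constant $p^{(a)}_{k-1}$, plus a term tending to $0$ in probability. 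Since $\zeta_n\dod\mathcal{S}_{2-a}$ (resp. $\mathcal{S}_1$), the continuous mapping theorem gives joint convergence of any finite block $(p^{(a)}_{k-1}\zeta_n)_{2\le k\le N}$, and Slutsky's lemma absorbs the null coordinatewise corrections. As weak convergence in $\mr^\infty$ is equivalent to convergence of all finite-dimensional marginals, this yields the asserted limits $(p^{(a)}_{k-1}\mathcal{S}_{2-a})_{k\ge 2}$ and $(p^{(a)}_{k-1}\mathcal{S}_1)_{k\ge 2}$.

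To prove the core claim I would work with the jump chain of the block-counting process. Let $N_0=n>N_1>\cdots$ be the successive states of $N_n$ just before each collision and let $I^{(i)}$ be the $i$-th decrement, so that $\mmp\{I^{(i)}=j\mid N_{i-1}=m\}=p_{m,j}$ and $X_{n,k}=\sum_{i=1}^{X_n}\ind{I^{(i)}=k-1}$. Compensating, I write $X_{n,k}=M_{n,k}+A_{n,k}$ with
$$
A_{n,k}:=\sum_{i=1}^{X_n}p_{N_{i-1},k-1},\qquad M_{n,k}:=\sum_{i=1}^{X_n}\big(\ind{I^{(i)}=k-1}-p_{N_{i-1},k-1}\big).
$$
Then $M_{n,k}$ is a martingale stopped at the absorption time $X_n\leq n-1$, with predictable quadratic variation $\sum_{i}p_{N_{i-1},k-1}(1-p_{N_{i-1},k-1})\leq X_n$; hence $\me M_{n,k}^2\leq\me X_n\leq n$ and $M_{n,k}=O_{\mmp}(\sqrt n)$. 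Since $\sqrt n=o(b_n)$ in both regimes (because $1/2<1/(2-a)$ when $0<a<1$, and $(\log n)^2=o(\sqrt n)$ when $a=1$), the martingale part is negligible.

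It remains to control the bias $A_{n,k}-p^{(a)}_{k-1}X_n=\sum_{i=1}^{X_n}(p_{N_{i-1},k-1}-p^{(a)}_{k-1})$. Because $N_n$ is strictly decreasing, the pre-collision states $N_0,\dots,N_{X_n-1}$ are \emph{distinct} elements of $\{2,\dots,n\}$, so the absolute value of this sum is bounded by the deterministic quantity $\sum_{m=2}^n|p_{m,k-1}-p^{(a)}_{k-1}|$. The remaining, genuinely computational, input is the uniform rate estimate
$$
|p_{m,k}-p^{(a)}_k|\ =\ O(1/m),\qquad m\to\infty,
$$
for each fixed $k$, which I would extract from the explicit beta-rates $\lambda_{m,k}={\rm B}(k+a-2,m-k+b)/{\rm B}(a,b)$ together with the second-order asymptotics $\lambda_m\sim C_{a,b}\,m^{2-a}$; for $a=1$ this is exact (e.g. $\lambda_m=m-1$ and $p_{m,k}=\frac{m}{k(k+1)(m-1)}$ in the Bolthausen--Sznitman case $b=1$). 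This gives $\sum_{m=2}^n|p_{m,k-1}-p^{(a)}_{k-1}|=O(\log n)=o(b_n)$, completing the core claim. The main obstacle is precisely this $O(1/m)$ rate, which requires the next-order term in the expansion of $\lambda_m$ via $\Gamma$-ratio asymptotics; everything else reduces, through the bound $X_n\le n-1$ and the strict monotonicity of $N_n$, to the observation that the two error terms are of orders $\sqrt n$ and $\log n$, both far below the stable fluctuation scale $b_n$ of $X_n$ itself. Finally, since $\sum_{j\geq 1}p^{(a)}_j=1$, formally summing the coordinate limits recovers the cited limits for $X_n$, a useful internal consistency check.
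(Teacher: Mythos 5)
Your proposal is correct in outline but takes a genuinely different route from the paper. The paper never invokes the known one\--dimensional limit theorem for $X_n$: it applies the Cram\'er--Wold device to $Z_n=\sum_k\beta_kX_{n,k}$, matches the first\--collision recursion for $Z_n$ with the analogous recursion for the first\--passage functional $Y_n=\sum_k\beta_kJ_{n,k-1}$ of a random walk whose step law is the limit $(p^{(a)}_k)$ of $I_n$, obtains the stable limit for $Y_n$ from results on random processes with immigration (and, for $a=1$, from \cite{Iksanov+Moehle:2007}), and then transfers the limit to $Z_n$ through a recursive Wasserstein\--distance estimate $t_n\le c_n+\sum_k\mmp\{I_n=k\}t_{n-k}$ with $c_n\le 2\|\beta\|^q d_q(I_n,\xi\wedge n)$. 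You instead use the $X_n$\--result as a black box and prove $X_{n,k}=p^{(a)}_{k-1}X_n+o_{\mmp}(b_n)$ via the Doob decomposition of the jump chain: the martingale part is $O_{\mmp}(\sqrt n)=o_{\mmp}(b_n)$ because $1/(2-a)>1/2$, and the strict monotonicity of $N_n$ converts the bias into the deterministic bound $\sum_{m=2}^n|p_{m,k-1}-p^{(a)}_{k-1}|$ --- a neat observation that also makes transparent why all coordinates become multiples of one stable variable. Two points remain to be nailed down before your argument is complete. First, the rate $|p_{m,k}-p^{(a)}_k|=O(1/m)$, which you rightly flag as the crux, does hold and is provable: the numerator $\binom{m}{k+1}\lambda_{m,k+1}$ has relative error $O(1/m)$ by gamma\--ratio asymptotics, and the substitution $x=y/m$ in $\lambda_m=\frac{1}{{\rm B}(a,b)}\int_0^1x^{a-3}(1-x)^{b-1}\bigl(1-(1-x)^m-mx(1-x)^{m-1}\bigr)\,{\rm d}x$ gives relative error $O(1/m)$ for $a<1$ and $O(\log m/m)$ for $a=1$; note your conclusion survives any rate $O(m^{-\delta})$ with $\delta>(1-a)/(2-a)$, so there is slack. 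Second, you must verify that the external inputs for $X_n$ (\cite{Gnedin+Yakubovich:2007}, Theorem 7, and \cite{Gnedin+Iksanov+Marynych+Moehle:2014}) are available with exactly the centering $(1-a)n$, respectively $\log n+\log\log n$, and the stable laws normalised as in the statement; the paper's longer route is self\--contained precisely because it avoids this dependence. Modulo these two verifications your proof is sound, and arguably more elementary.
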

\begin{proof}
For $m\ge 2$ and $\beta_2,\ldots,\beta_m\in\mr$ we consider the linear combinations
$q^{(a)}:=\sum_{k=2}^m \beta_k p_{k-1}^{(a)}$ and $Z_n:=\sum_{k=2}^m \beta_k X_{n,k}$.
To apply the Cram\'er--Wold device we need to  prove that
\begin{equation}\label{eq:renewal_main_goal1}
\frac{Z_n-q^{(a)}(1-a)n}{(1-a)n^{1/(2-a)}}~\dod~ q^{(a)}
\mathcal{S}_{2-a},\quad n\to\infty
\end{equation}
in case (i), and that
\begin{equation}\label{eq:renewal_main_goal2}
   n^{-1}(\log n)^2Z_n - q^{(1)}(\log n+\log\log n)
~\dod~ q^{(1)}\mathcal{S}_{1}, \quad n\to\infty
\end{equation}
in case (ii). As before, we denote by $I_n$ the number of blocks involved in the first collision of $\Pi_n$ minus one. Decomposing at the first collision we obtain a stochastic recurrence
\begin{equation}\label{eq:z_recursion}
\quad Z_1=0,\quad Z_n\ \od\ \sum_{k=2}^m\beta_k\1_{\{I_n=k-1\}}+\widetilde{Z}_{n-I_n},\quad n\ge 2,
\end{equation}
where $\widetilde{Z}_k\od Z_k$ for every $k\in\mn$ and $(\widetilde{Z}_k)_{k\in\mn}$ is independent of $I_n$.

It is known (see \cite{Delmas} (Lemma 2.1) for $a\in (0,1)$ and
\cite{Drmota+Iksanov+Moehle+Roesler:2007} (p.~1409) for $a=1$) that
under the assumptions of Theorem \ref{main4} there exists a distributional limit
\begin{equation}\label{eq:ind_conv}
I_n \ \dod \ \xi,\quad n\to\infty,
\end{equation}
where $\xi$ is a random variable with distribution $\mmp\{\xi=k\}=p^{(a)}_k$, $k\in\mn$ and $\me \xi=(1-a)^{-1}$.

Consider an ordinary random walk $(S_j)_{j\in\mn_0}$ defined by $S_0:=0$ and $S_j:=\xi_1+\cdots+\xi_j$ for $j\in\mn$,
where $\xi_1,\xi_2,\ldots$ are independent copies of $\xi$,
and denote by $T_n:=\inf\{j\in\mn_0:S_j \ge n\}$, $n\in\mn_0$ the level $n$ first-passage time. For $n\in\mn_0$ and $k\in\mn$ let
$$J_{n,k} :=\sum_{j=1}^{T_n}\1_{\{\xi_j=k\}}
=\sum_{j\geq 1}\1_{\{\xi_j=k,S_{j-1}\leq n-1\}}$$ be the number of jumps of size $k$ before the random walk passes level $n$,
and set $Y_n:=\sum_{k=2}^{m}\beta_k J_{n, k-1}$. A standard conditioning argument yields the recursion
\begin{equation}\label{eq:y_recursion}
Y_0=0,\quad Y_n\ \od\ \sum_{k=2}^m\beta_k\1_{\{\xi=k-1\}}
+\widetilde{Y}_{n-\xi\wedge n},\quad n\in\mn,
\end{equation}
where $\widetilde{Y}_k\od Y_k$ for every $k\in\mn_0$ and
$(\widetilde{Y}_k)_{k\in\mn_0}$
is independent of $\xi$. Comparing \eqref{eq:z_recursion} and
\eqref{eq:y_recursion} and keeping in mind \eqref{eq:ind_conv} we may anticipate that if $Y_n$, properly centered and normalised,
converges in distribution, then the same holds for $Z_n$.
The subsequent proof of this intuition is split in two steps. The first step shows that relations \eqref{eq:renewal_main_goal1} and \eqref{eq:renewal_main_goal2} hold with $Y_n$ replacing $Z_n$. The second step derives from this the convergence for $Z_n$.

{\sc Step 1.} The representation
\begin{align*}
Y_{n+1}\ & = \sum_{k=2}^m\beta_k\sum_{j=1}^{T_{n+1}}\1_{\{\xi_j=k-1\}}
\ = \sum_{k=2}^m\beta_k\sum_{j\ge 1}\1_{\{\xi_j=k-1,\,S_{j-1}\leq n\}}\\
 & =  \sum_{j\ge 0}\1_{\{S_j\le n\}}\sum_{k=2}^m\beta_k\1_{\{\xi_{j+1}=k-1\}}
 \ =\ \sum_{j\ge 0}\eta_{j+1}\1_{\{S_j\le n\}},
\quad n\in\mn_0
\end{align*}
with $\eta_j:=\sum_{k=2}^m\beta_k\1_{\{\xi_j=k-1\}}$, $j\in\mn$, shows that $(Y_n)_{n\in\mn_0}$ has the same distribution as random process with
immigration in the sense of \cite{Iksanov+Marynych+Meiners:2017-1} (formula (1)) with random but constant response process
$\eta:=\sum_{k=2}^m\beta_k\1_{\{\xi=k-1\}}$. Thus,
\eqref{eq:renewal_main_goal1} with $Y_n$ replacing $Z_n$ follows from
 \cite{Iksanov+Marynych+Meiners:2017-1} (Theorem 2.4 applied with $u=1$,
$\alpha=2-a$, $h(t)=\me\eta=q_a$, $\rho=0$ and $p=1$).
In order to see that \eqref{eq:renewal_main_goal2} holds with $Y_n$ replacing by $Z_n$ decompose
$$
Y_{n+1}\ = \ \sum_{j\ge 0}\eta_{j+1}\1_{\{S_j\le n\}}
\ =\ (\me\eta)\sum_{j\ge 0}\1_{\{S_j\le n\}}
+ \sum_{j\ge 0} (\eta_{j+1}-\me\eta)\1_{\{S_j\le n\}}.
$$
By \cite{Iksanov+Moehle:2007} (Proposition 2), $n^{-1}(\log n)^2\sum_{j\ge 0}\1_{\{S_j\le n\}}-\log n-\log\log n\dod\mathcal{S}_1$
as $n\to\infty$. In view of Slutsky's lemma it would be enough to prove that
$n^{-1}(\log n)^2\sum_{j\ge 0}(\eta_{j+1}-\me\eta)\1_{\{S_j\le n\}}\overset{\mmp}{\to} 0$ as
$n\to\infty$. The latter convergence even holds in the mean-square sense, since
$$\me\Bigg(\sum_{j\ge 0}(\eta_{j+1}-\me\eta)\1_{\{S_j\le n\}}\Bigg)^2
=({\rm Var}\,\eta) \sum_{j\ge 0}\mmp\{S_j\le n\}=o(n)$$ as $n\to\infty$,
where the last estimate is a consequence of the elementary renewal theorem.

{\sc Step 2.} As in \cite{Gnedin+Iksanov+Marynych+Moehle:2014}
we use probability distances. For $q\in (0,1]$ and random variables $X$ and $Y$ having
finite $q$th moment the Wasserstein distance $d_q$ is defined via
$d_q(X,Y)=\inf\me|\widehat{X}-\widehat{Y}|^q$,
where the infimum is taken over all couplings $(\widehat{X},\widehat{Y})$
such that $\widehat{X}\od X$ and $\widehat{Y}\od Y$.
For properties of $d_q$ we refer the reader to \cite{Gnedin+Iksanov+Marynych+Moehle:2014} (Proposition 4.1).

To prove \eqref{eq:renewal_main_goal1} it suffices to show that
\begin{equation}\label{eq:renewal_main_goal11}
d_q(Y_n,Z_n) \ = \ o(n^{q/(2-a)}),\quad n\to\infty
\end{equation}
for some $q\in(0,1]$. Likewise, \eqref{eq:renewal_main_goal2} will follow from
\begin{equation}\label{eq:renewal_main_goal21}
d_q(Y_n,Z_n) \ = \ o(n^{q}(\log n)^{-2q}),\quad n\to\infty
\end{equation}
for some $q\in(0,1]$, see \cite{Gnedin+Iksanov+Marynych+Moehle:2014} (formulae (5.1) and (5.2)).

From $|J_{i,k} -J_{j,k}|\le |i-j|$ for $i,j\in\mn_0$ and $k\in\mn$ (the number of $k$-jumps while the random walks stays in $[i\wedge j-1, i\vee j-1)$ is dominated by the number of $1$-jumps which is $|i-j|$) it follows that
\begin{equation}\label{eq:y_sublinear}
|Y_i-Y_j|\ \le\ \|\beta\||i-j|,
\end{equation}
where $\|\beta\|:=\sum_{k=2}^m |\beta_k|$.

Let $(\hat{I}_n,\hat{\xi})$ be a coupling of $I_n$ and $\xi$ such that
$d_q(I_n,\xi\wedge n)=\me|\hat{I}_n-\hat{\xi}\wedge n|^q$. Further, let
$(\hat{Y}_j)_{j\in\mn_0}$ (respectively,
$(\hat{Z}_j)_{j\in\mn}$) be an arbitrary copy of
$(Y_j)_{j\in\mn_0}$ (respectively, $(Z_j)_{j\in\mn}$) independent of $(\hat{I}_n,\hat\xi)$.
Using recurrences \eqref{eq:z_recursion} and \eqref{eq:y_recursion} in combination with the inequality $|x+y|^q\leq |x|^q+|y|^q$ for $x,y\in \mr$ we infer
\begin{align*}
t_n & \ :=\ d_q(Y_n,Z_n)
\ \le\ \me\bigg|\sum_{k=2}^{m}\beta_k\1_{\{\hat{\xi}=k-1\}}+\hat{Y}_{n-\hat{\xi}\wedge
n}-\sum_{k=2}^m\beta_k\1_{\{\hat{I}_n=k-1\}}-\hat{Z}_{n-\hat{I}_n}\bigg|^q\\
& \le\ \me \bigg|\sum_{k=2}^m\beta_k(\1_{\{\hat{\xi}=k-1\}}-\1_{\{\hat{I}_n=k-1\}})\bigg|^q
+\me|\hat{Y}_{n-\hat{\xi}\wedge n}-\hat{Z}_{n-\hat{I}_n}|^q\\
& \le\ \me \bigg|\sum_{k=2}^{m}\beta_k(\1_{\{\hat{\xi}=k-1\}}-\1_{\{\hat{I}_n=k-1\}})\bigg|^q
+\me|\hat{Y}_{n-\hat{\xi}\wedge n}-\hat{Y}_{n-\hat{I}_n}|^q
+\me|\hat{Y}_{n-\hat{I}_n}-\hat{Z}_{n-\hat{I}_n}|^q.
\end{align*}
Passing to the infimum over all pairs
$((\hat{Y}_j,\hat{Z}_j))_{1\le j\le n-1}$ in the last summand leads to
$$
t_n\ \le\ c_n +\sum_{k=1}^{n-1}\mmp\{I_n=k\}t_{n-k},\quad n\ge 2
$$
with
$
c_n:=\me|\sum_{k=2}^{m}\beta_k(\1_{\{\hat{\xi}=k-1\}}-\1_{\{\hat{I}_n=k-1\}})|^q +\me|\hat{Y}_{n-\hat{\xi}\wedge
n}-\hat{Y}_{n-\hat{I}_n}|^q.
$
Applying \eqref{eq:y_sublinear} we obtain
\begin{align*}
c_n & \ \le\ \|\beta\|^q\big(\mmp\{\hat{I}_n\neq \hat{\xi}\wedge n\}+\me |\hat{I}_n-\hat{\xi}\wedge n|^q\big)\\
&  \ = \ \|\beta\|^q\big(\mmp\{|\hat{I}_n- \hat{\xi}\wedge n|^q \geq 1\}+\me |\hat{I}_n-\hat{\xi}\wedge n|^q\big)\\
&  \ \le\ 2\|\beta\|^q\me|\hat{I}_n-\hat{\xi}\wedge n|^q
\ = \ 2\|\beta\|^qd_q(I_n,\xi\wedge n).
\end{align*}
The proof of \eqref{eq:renewal_main_goal11} and
\eqref{eq:renewal_main_goal21} is completed along the lines of the argument
in \cite{Gnedin+Iksanov+Marynych+Moehle:2014} (the bottom of p.~504).
\end{proof}

The instance $b=1$ in part (ii) of the theorem is the asymptotics of the
collision spectrum for the Bolthausen--Sznitman coalescent. Note that
neither the limit law nor the scaling/centering constants depend on $b$. This suggests that the stable limit for the
collision spectrum in part (i) also holds in the more general setting of \cite{Gnedin+Yakubovich:2007}.

\noindent \textbf{Remark}. We conjecture that in the situation of Theorem \ref{main4} the moments of $X_{n,k}$
differ only little from those of $p_{k-1}^{(a)}X_n$ if $n$ is sufficiently
large. While this is open for $a\in (0,1)$, for $a=1$ an adaptation of the method of sequential
approximation, see \cite{Gnedin+Iksanov+Marynych+Moehle:2014} (Section 5.3),
shows that, for every $k\ge 2$ and $j\in\mn$,
\begin{equation} \label{eq:expansion}
\me X_{n,k}^j\ =\ \left(p_{k-1}^{(1)}\frac{n}{\log n}\right)^j
\left(1+\frac{m_j}{\log n}+O\left(\frac{1}{\log^2n}\right)\right),
\qquad n\to\infty,
\end{equation}
where the sequence $(m_j)_{j\in\mn_0}$ is recursively defined via
$m_0:=0$ and $m_j:=m_{j-1}+\kappa_j/j$ for $j\in\mn$, with
$\kappa_j:=(j+b-1)\Psi(j+b)+j-(b-1)\Psi(b)$, $j\in\mn$. Here, $\Psi$
denotes the logarithmic derivative of the gamma function. By
\cite{Gnedin+Iksanov+Marynych+Moehle:2014} (Theorem 3.2), \eqref{eq:expansion}
coincides with the second order expansion of the $j$th moment of
$p_{k-1}^{(1)}X_n$.

\vspace{5mm}

\noindent {\bf Acknowledgement}. A. Marynych was supported by the return fellowship of the Alexander von Humboldt Foundation. The publication contains the results of studies conducted by President's of Ukraine grant for competitive projects $\Phi$70 of the State Fund for Fundamental Research.

\end{document}